\definecolor{myred}{HTML}{c20014}
\definecolor{mygreen}{HTML}{008000}
\newtheorem{theo}{Theorem}[section]
\newtheorem{prop}{Proposition}[section]
\newtheorem{lemma}{Lemma}[section]
\theoremstyle{definition}
\newtheorem{as}{Assumption}[]
\theoremstyle{remark}
\newtheorem{rem}{Remark}[section]
\numberwithin{equation}{section}
\renewcommand{\leq}{\leqslant}
\renewcommand{\geq}{\geqslant}
\providecommand{\keywords}[1]
{
  \small	
  \textbf{{Keywords.}} #1
}
\providecommand{\AMS}[1]
{
  \small	
  \textbf{{AMS subject classifications.}} #1
}
\title{Stabilization of the wave equation through nonlinear Dirichlet actuation\thanks{
This work has been partially supported by MIAI@Grenoble Alpes (ANR-19-P3IA-0003). A preliminary version of this work, containing  the well-posedness and asymptotic stability results, has appeared in the Proceedings of the Third IFAC Conference on Modelling, Identification and Control of Nonlinear Systems, Tokyo, Japan, September 2021.
}}
\author{Nicolas Vanspranghe\thanks{Univ. Grenoble Alpes, CNRS, Grenoble INP, GIPSA-lab, 38000 Grenoble, France. Email: \nolinkurl{name.surname@gipsa-lab.fr}.}
\and Francesco Ferrante\thanks{Department of Engineering, University of Perugia, 06125 Perugia, Italy. Email: \nolinkurl{francesco.ferrante@unipg.it}.} \and Christophe Prieur\footnotemark[2]
}
\date{}
\begin{document}

\maketitle

\begin{abstract}
In this paper, we consider the problem of nonlinear (in particular, saturated) stabilization of the high-dimensional wave equation with Dirichlet boundary conditions. The wave dynamics are subject to a dissipative nonlinear velocity feedback and generate a strongly continuous semigroup of contractions on the optimal energy space $L^2(\Omega) \times H^{-1}(\Omega)$. It is first proved that any solution to the closed-loop equations converges to zero in the aforementioned topology. 
Secondly, under the condition that the feedback nonlinearity has linear growth around zero, polynomial energy decay rates are established for solutions with smooth initial data. This constitutes new Dirichlet counterparts to well-known results pertaining to nonlinear stabilization in $H^1(\Omega)\times L^2(\Omega)$ of the wave equation with Neumann boundary conditions.
\end{abstract}

\keywords{
Wave equation,  boundary stabilization, saturating feedback, asymptotic stability, polynomial decay rates.
}

\AMS{
35L05, 93C20,  93D15, 93D20.
}

\section{Introduction}
\label{sec:intro}

Let $\Omega \subset \mathbb{R}^d$ ($d \geq 2$) be a bounded domain with smooth boundary $\Gamma$. Given a relatively open nonempty subset $\Gamma_0$ of $\Gamma$, we consider the wave equation subject to non-homogeneous Dirichlet boundary conditions:
\begin{subequations}
\label{eq:pde-bc}
\begin{align}
\label{eq:pure-wave}
&\partial_{tt}u(x, t) - \Delta u(x, t) = 0 & & \mbox{in}~ \Omega \times (0, +\infty), \\
\label{eq:act-bound}
&u_{|\Gamma}(\sigma, t) = - g( U(\sigma, t)) & & \mbox{on}~ \Gamma_0 \times (0, +\infty),\\
\label{eq:hom-bound}
&u_{|\Gamma}(\sigma, t) = 0 & &\mbox{on}~ \Gamma_1 \times (0, +\infty),
\end{align}
\end{subequations}
where $\Gamma_1 \triangleq \Gamma \setminus \Gamma_0$, $U$ represents a control input, and $g$ is a real function 
fulfilling the following assumption.
\begin{as}
\label{as:g}
The scalar mapping $g$ satisfies the following properties:
\begin{enumerate}[label=(\roman*)]
\item $g$ is globally Lipschitz continuous and nondecreasing;
\item $g(s) = 0$ if and only if $s = 0$.
\end{enumerate}
\end{as}

\textbf{Background.} The general problem of this paper is the feedback stabilization of the control system \eqref{eq:pde-bc} in presence of a static pointwise nonlinearity $g$. Consider the velocity feedback
\begin{equation}
\label{eq:feedback-law}
U(\sigma, t) = - \partial_{\nu} [A^{-1}u'](\sigma, t),
\end{equation}
where $u'$ is the time derivative of $u$, 
$\partial_\nu$ denotes the outward normal derivative, and $A^{-1}$ is the inverse of the positive ``minus Laplacian with homogeneous Dirichlet boundary conditions'' operator.
The corresponding linear feedback system (i.e., when $g$ is the identity) was first introduced by Lasiecka and Triggiani in \cite{lasiecka_uniform_1987}, with initial data lying in the energy space

\begin{equation}
\mathcal{H} \triangleq L^2(\Omega) \times H^{-1}(\Omega).
\end{equation}
The choice of state space is motivated by optimal regularity results for second-order hyperbolic equations with Dirichlet boundary data in $L^2(0, T; L^2(\Gamma))$ -- see \cite{lasiecka_non_1986}. It was proved in \cite{lasiecka_uniform_1987} that the linear version of \eqref{eq:pde-bc}-\eqref{eq:feedback-law} gives rise to an exponentially stable semigroup of operators on $\mathcal{H}$ under the assumption that the whole boundary is actuated (i.e., $\Gamma = \Gamma_0$) and that $\Omega$ satisfies suitable geometrical conditions. The proof relies on the analysis of a new variable $p$ defined as
\begin{equation}
\label{eq:cv-p}
p \triangleq A^{-1} u'
\end{equation}
which is smoother and solves a wave-type equation as well. The result was later refined by the same authors in \cite{lasiecka_uniform_1992} where feedback acting only on a subset of the boundary is allowed and, most importantly, specific geometrical conditions related to the analysis of the $p$-variable by multipliers are relaxed. This was achieved by the mean of {another} change of variable operating at the level of pseudodifferential calculus. In short, after transposing problem \eqref{eq:pde-bc}-\eqref{eq:feedback-law} to the half-space via partition of unity and truncating the solution with respect to the time variable,
one defines a new variable $w$ by
\begin{equation}
\label{eq:cv-lambda}
\mathcal{F}[w](\xi, \omega; x) = \lambda(\xi, \omega) \mathcal{F}[{u}](\xi, \omega; x), \quad \xi \in \mathbb{R}^{d-1}, \quad \omega \in \mathbb{R}, \quad  x \geq 0,
\end{equation}
where $\mathcal{F}$ denotes the Fourier transform in both tangential and time variables and $\lambda$ is a carefully constructed symbol. While transformations  \eqref{eq:cv-p} and \eqref{eq:cv-lambda} are quite different in nature,  both enable computations on variables with $H^1(\Omega) \times L^2(\Omega)$-regularity.

As far as we know, there has been no attempt to extend the stability analysis of the closed-loop system \eqref{eq:pde-bc}-\eqref{eq:feedback-law} to the \emph{nonlinear} case.
Yet, one can see problem \eqref{eq:pde-bc}-\eqref{eq:feedback-law} as a natural Dirichlet counterpart to the wave equation with  nonlinear Neumann boundary dissipation
\begin{subequations}
\label{eq:neumann-problem}
\begin{align}
\label{eq:pure-wave-bis}
&\partial_{tt}u(x, t) - \Delta u(x, t) = 0 & & \mbox{in}~ \Omega \times (0, +\infty) \\
\label{eq:feedback-neumann}
&\partial_\nu u(\sigma, t) = - g(\partial_t u(\sigma, t)) & & \mbox{on}~ \Gamma_0 \times (0, +\infty),\\
&u_{|\Gamma}(\sigma, t) = 0 & &\mbox{on}~ \Gamma_1 \times (0, +\infty),
\end{align}
\end{subequations}
which, in contrast, have been extensively studied in the literature. To cite only a few, when the nonlinearity $g$ has linear growth at infinity, \emph{uniform} decay of the  $H^1(\Omega) \times L^2(\Omega)$-energy of solutions to \eqref{eq:neumann-problem} can be achieved, as in \cite{zuazua_uniform_1990} or \cite{lasiecka_uniform_1993} -- see also \cite{komornik_exact_1994} and the references therein, or more recently \cite{daoulatli_uniform_2009}.
%
In the one-dimensional settings, arguments based on Riemann invariants are available, and  the decay of the energy can be analyzed via  appropriate iterated sequences. See for instance \cite{chitour_one-dimensional_2020}, where $g$ is allowed to be a multivalued monotone mapping, or \cite{vancostenoble_optimality_2000}, where it is  proved, in particular, that exponential or polynomial uniform decay  cannot be achieved when $g$ represents a pointwise saturation mapping -- see also \cite{xu_saturated_2019} or \cite{prieur_wave_2016} for a stability analysis in the saturated case.

\textbf{Outline of the paper and contributions.}
This paper aims at bridging the gap between Neumann and Dirichlet boundary conditions as far as nonlinear boundary stabilization  is concerned. 
First, we prove that the nonlinear dynamics \eqref{eq:pde-bc}-\eqref{eq:feedback-law} generate a strongly continuous semigroup of contractions on the energy space $\mathcal{H}$ (Theorem \ref{th:had-wp}) that is globally asymptotically stable around the zero equilibrium (Theorem \ref{th:asymptotic}). The proof relies on LaSalle's invariance principle and unique continuation   for the wave  equation.

Next, having in mind the more specific problem of \emph{saturated} boundary stabilization, in Section \ref{sec:pdr}, we work under the assumption that $g$ has linear growth around zero (see Assumption \ref{as:nonlinearity} below). Then, by analogy with the  Neumann case, we focus on \emph{non-uniform} decay rates for solutions with ``smooth'' initial data. We establish a polynomial decay rate for smooth solutions (Theorem \ref{th:pdr}) that holds under standard geometrical conditions  -- see Assumption \ref{as:geometry} below, which is however always satisfied when $\Gamma_0 = \Gamma$, i.e., the whole boundary is actuated. To do so, we consider the change of variable \eqref{eq:cv-p} and we derive appropriate integral inequalities using suitable multipliers. 
 Note that the question of uniform stability when $g$ has linear growth at infinity is out of the scope of the paper -- this is discussed in  Section \ref{sec:conc} below.
Throughout the paper, one can think of the ``hard'' saturation mapping $\mathrm{sat}_S$ with threshold $S > 0$, defined by
\begin{equation}
\label{eq:def-sat}
\mathrm{sat}_S(s) \triangleq \left \{ \begin{aligned} & s && \mbox{if}~ |s|\leq S, \\ &S \frac{s}{|s|} &&\mbox{otherwise,}  \end{aligned} \right.
\end{equation}
 as a prototype nonlinearity satisfying Assumptions \ref{as:g} and \ref{as:nonlinearity}, highlighting the fact that no differentiability condition on $g$ is prescribed.


\textbf{Notation and elements from elliptic theory.} We end this section by introducing some notation and recalling useful results from elliptic theory. 

First, if $H$ is a given Hilbert space, we denote by $\|\cdot\|_H$ its norm, and its scalar product is written $(\cdot, \cdot)_H$. For $T > 0$, we denote by $W^{1, p}(0, T; H)$ the subspace of $L^p(0, T; H)$ composed of (classes of) $H$-valued functions $\phi$ such that, for some $\xi$ in $H$ and $\psi$ in $L^p(0, T; H)$, $\phi(t) = \xi + \int_0^t \psi(s) \, \mathrm{d}s $ almost everywhere in $(0, T)$.
Such a class $\phi$ is identified with its continuous representative and we say that $\phi' = \psi$ in the sense of $H$-valued distributions. Note that vector-valued integrals are intended in the sense of Bochner. Also, the space of bounded linear operators between two normed spaces $E$ and $F$ is denoted by $\mathcal{L}(E, F)$.

In this paper, all scalar functions are real-valued. The notation $\mathrm{d}x$ indicates the standard Lebesgue measure on $\mathbb{R}^d$ while $\mathrm{d}\sigma$ denotes the induced surface measure on $\Gamma$. By $H^s(\Omega)$ (resp. $H^s(\Gamma)$) we denote the $L^2(\Omega)$-based (resp. $L^2(\Gamma)$-based) Sobolev space of order $s$. The space of compactly supported and infinitely differentiable functions on $\Omega$ is written $\mathcal{C}^\infty_c(\Omega)$. We also recall that $H^1_0(\Omega)$ is defined as the closure of $\mathcal{C}_c^\infty(\Omega)$ in $H^1(\Omega)$. Furthermore, $H^{-1}(\Omega)$ is the topological dual of $H^1_0(\Omega)$.

The unbounded operator $A$ can be defined\footnote{Alternatively, $A$ can be defined as a duality mapping between $H^1_0(\Omega)$ and $H^{-1}(\Omega)$, in which case \eqref{eq:dom-A} is recovered \emph{a posteriori} by applying elliptic regularity theory.} as follows: having set
\begin{equation}
\label{eq:dom-A}
\mathcal{D}(A) \triangleq H^2(\Omega) \cap H^1_0(\Omega),
\end{equation}
we let $Au \triangleq - \Delta u \in L^2(\Omega)$ for all $u \in \mathcal{D}(A)$.
Then, $A$ is a closed strictly positive self-adjoint  operator on $L^2(\Omega)$.
Its dense domain $\mathcal{D}(A)$ is equipped with the norm $\|A\cdot\|_{L^2(\Omega)}$, which is equivalent to the norm induced by $H^2(\Omega)$. The operator $A$ possesses fractional powers $A^s$, $s \in \mathbb{R}$ -- see for instance \cite[Chapter II, Section 2.1]{temam_infinite-dimensional_2012}. Those are also strictly positive self-adjoint operators. For $s \geq 0$, $\mathcal{D}(A^s)$ are dense subsets of $L^2(\Omega)$, which we equip with the norm $\|A^s \cdot\|_{L^2(\Omega)}$. 
In particular, we have $\mathcal{D}(A^{1/2}) = H^1_0(\Omega)$, with
\begin{equation}
\|\nabla w\|^2_{L^2(\Omega)^d} = \|A^{1/2}w\|^2_{L^2(\Omega)} = \|w\|^2_{H^1_0(\Omega)} \quad \mbox{for all}~w \in H^1_0(\Omega).
\end{equation}
Then, we let $\mathcal{D}(A^{-s}) \triangleq \mathcal{D}(A^s)'$ and we can extend $A^s$ as an isomorphism between $L^2(\Omega)$ and $\mathcal{D}(A^{-s})$. Here, $H^{-1}(\Omega)$ is equipped with the scalar product  \begin{equation} (v_1, v_2)_{H^{-1}(\Omega)} \triangleq (A^{-1/2}v_1, A^{-1/2}v_2)_{L^2(\Omega)}, \end{equation}
 which induces a norm equivalent to the dual one; we also recover $\mathcal{D}(A^{-1/2}) = H^{-1}(\Omega)$. Throughout the paper, we use the following chain of continuous embeddings:
\begin{equation}
\label{eq:chain}
\mathcal{D}(A) \hookrightarrow H^1_0(\Omega) \hookrightarrow L^2(\Omega) \hookrightarrow H^{-1}(\Omega) \hookrightarrow \mathcal{D}(A^{-1}).
\end{equation}

Finally, we define the Dirichlet map $D$, which is a continuous right inverse for the trace. For any $f$ in $H^{1/2}(\Gamma)$, there exists a unique $u$ in $H^1(\Omega)$ solving $- \Delta u = 0$ and $u_{|\Gamma} = f$; and we let $Df \triangleq u$.
 The mapping  $D$ defined in this way is continuous from $H^{1/2}(\Gamma)$ to $L^2(\Omega)$
and can be extended as a continuous operator from $L^2(\Omega)$ into $L^2(\Gamma)$. We define its adjoint $D^*$  by
$(D^*u, f)_{L^2(\Gamma)} = (u, Df)_{L^2(\Omega)}$ for all $u$ in $L^2(\Omega)$ and $f$ in $L^2(\Gamma)$.
Extensions on fractional Sobolev spaces are denoted with the same symbols:
\begin{equation}
\label{eq:properties-D}
D \in \mathcal{L}(H^s(\Gamma), H^{s+1/2}(\Omega)), \quad D^* \in \mathcal{L}(H^{s}(\Omega), H^{s + 1/2}(\Gamma)) \quad \mbox{for all}~ s \in \mathbb{R}.
\end{equation}


\section{Well-posedness and asymptotic stability}
\label{sec:wp-as}

In this section, we give the operator-theoretic formulation of the evolution problem \eqref{eq:pure-wave} with initial data in $L^2(\Omega) \times H^{-1}(\Omega)$ and feedback control \eqref{eq:feedback-law}.
After that, we state  and prove the well-posedness and asymptotic stability properties of the feedback system \eqref{eq:pde-bc}-\eqref{eq:feedback-law}.

\subsection{Operator model and well-posedness}

We shall recast the closed-loop evolution equations \eqref{eq:pde-bc}-\eqref{eq:feedback-law} into a first-order abstract Cauchy problem on the energy space $\mathcal{H}$ and state well-posedness results by relying on nonlinear semigroup theory.
With a little abuse of notation, we denote by $g$ the nonlinear Lipschitz mapping on $L^2(\Gamma)$ defined by $g(f)(\sigma) \triangleq g(f(\sigma))$ for any $f$ in $L^2(\Gamma)$. We also define a projection operator $P$ on $L^2(\Gamma)$ by
$
[Pf](\sigma) = \mathds{1}_{\Gamma_0}(\sigma) f(\sigma)  ~\mbox{for any}~ f \in L^2(\Gamma).
$
From the Green formula, it follows that 
\begin{equation}
\label{eq:id-normal-derivative}
- D^*Ap = \partial_\nu p \quad \mbox{for all}~ p \in \mathcal{D}(A).
\end{equation}
Therefore, the boundary conditions  associated with the feedback law \eqref{eq:feedback-law} can be rewritten as follows:
\begin{equation}
\label{eq:func-trace}
u_{|\Gamma} =- Pg(D^*u').
\end{equation}
Next, we introduce the nonlinear operator $\mathcal{A}$ associated with the closed-loop system \eqref{eq:pde-bc}-\eqref{eq:feedback-law}. Recalling the chain of embeddings \eqref{eq:chain} and that $A$ maps $L^2(\Omega)$ onto $\mathcal{D}(A^{-1})$, we define $\mathcal{A}$ by
\begin{subequations}
\begin{align}
&\mathcal{D}(\mathcal{A}) \triangleq  \left \{ [u, v] \in \mathcal{H} :
 v \in L^2(\Omega), A[u +D P g(D^*v)] \in H^{-1}(\Omega)\right \} \\
&\mathcal{A}[u, v] \triangleq [- v, Au + ADPg(D^*v)].
\end{align}
\end{subequations}
Equivalently, $\mathcal{D}(\mathcal{A})$ is the set of all $[u, v]$ in $L^2(\Omega) \times L^2(\Omega)$ such that $u + DPg(D^*v)$ belongs to $ H^1_0(\Omega)$.

Note that the feedback law \eqref{eq:feedback-law} appears as a natural choice when (formally) differentiating the energy functional
\begin{equation}
\mathcal{E}(u, v) \triangleq \frac{1}{2}  \{ \|u\|^2_{L^2(\Omega)} + \|v \|^2_{H^{-1}(\Omega)} \}, \quad [u, v] \in \mathcal{H},
\end{equation}
along ``trajectories'' of the open-loop system \eqref{eq:pde-bc}. Indeed, this leads to the energy identity
\begin{equation}
\frac{\mathrm{d}}{\mathrm{d}t} \mathcal{E}(u, u') = \int_{\Gamma_0} g(U(t)) \partial_\nu [A^{-1}u'] \, \mathrm{d}\sigma 
\end{equation}
and since $g$ satisfies Assumption \ref{as:g}, we see that \eqref{eq:feedback-law} renders the energy $\mathcal{E}$ nonincreasing along the trajectories.

In the sequel, we employ the standard nonlinear semigroup terminology: by a \emph{strong} solution to \eqref{eq:pde-bc}-\eqref{eq:feedback-law}, we mean an absolutely continuous $\mathcal{H}$-valued function $[u, v]$ that satisfies $[u(t), v(t)] \in \mathcal{D}(\mathcal{A})$ for all $t \geq 0$ and
\begin{equation}
\frac{\mathrm{d}}{\mathrm{d}t} [u, v] + \mathcal{A}[u, v] = 0 \quad \mbox{a.e.}
\end{equation}
in the sense of strong differentiation in $\mathcal{H}$; by a \emph{generalized} solution to \eqref{eq:pde-bc}-\eqref{eq:feedback-law}, we mean a continuous $\mathcal{H}$-valued function $[u, v]$ that is, on each interval $[0, T]$, the uniform limit of some sequence of strong solutions. 
%
\begin{theo}[Hadamard well-posedness]
\label{th:had-wp}
The nonlinear operator $\mathcal{A}$ is densely defined and maximal monotone. Thus, $- \mathcal{A}$ is the infinitesimal generator of a strongly continuous semigroup $\{ \mathcal{S}_t\}$ of (nonlinear) contractions on the energy space $\mathcal{H}$. For all initial data $[u_0, v_0]$ in $\mathcal{H}$, there exists a unique generalized solution $[u, u'] \in \mathcal{C}(\mathbb{R}^+, \mathcal{H})$ to \eqref{eq:pde-bc}-\eqref{eq:feedback-law}. If $[u_0, v_0]$ belongs to $\mathcal{D}(\mathcal{A})$, then $[u, u']$ is a strong solution to \eqref{eq:pde-bc}-\eqref{eq:feedback-law}. 
Furthermore,
\begin{enumerate}[label=(\roman*)]
\item Strong solutions satisfy the inequality
\begin{equation}
\label{eq:kato-bound}
 \|\mathcal{A} [u(t), u'(t)]\|_\mathcal{H} \leq \|\mathcal{A}[u_0, v_0]\|_\mathcal{H} \quad \mbox{for all}~ t \geq 0;
\end{equation}
\item Strong solutions satisfy the energy identity
\begin{equation}
\label{eq:energy-identity}
\frac{\mathrm{d}}{\mathrm{d}t} \mathcal{E}(u, u') = - \int_{\Gamma_0} g(D^*u') D^*u' \, \mathrm{d}\sigma = \int_{\Gamma_0} g(-\partial_\nu [A^{-1}u']) \partial_\nu [A^{-1}u'] \, \mathrm{d} \sigma
\end{equation}
in the scalar distribution sense on $(0, +\infty)$.
\end{enumerate}

%

\end{theo}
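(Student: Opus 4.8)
\emph{The plan} is to prove that $\mathcal{A}$ is densely defined and maximal monotone on the Hilbert space $\mathcal{H}$, invoke the Komura generation theorem for contraction semigroups associated with such operators, and then deduce \eqref{eq:kato-bound}--\eqref{eq:energy-identity} from contractivity and a direct energy computation. The single algebraic fact used repeatedly is that, by the definition of the $H^{-1}(\Omega)$ inner product and self-adjointness of $A^{1/2}$,
\[
(Ah, z)_{H^{-1}(\Omega)} = (h, z)_{L^2(\Omega)} \qquad \text{for } h \in H^1_0(\Omega),\ z \in L^2(\Omega),
\]
combined with the adjointness of $D$ and $D^*$ and the pointwise action of $P$. \emph{Monotonicity}: for $[u_i,v_i] \in \mathcal{D}(\mathcal{A})$ set $w_i \triangleq u_i + DPg(D^*v_i) \in H^1_0(\Omega)$; since $w_1 - w_2 \in H^1_0(\Omega)$ and $v_1 - v_2 \in L^2(\Omega)$, the identity above gives
\[
\bigl(\mathcal{A}[u_1,v_1] - \mathcal{A}[u_2,v_2],\, [u_1,v_1] - [u_2,v_2]\bigr)_{\mathcal{H}} = \int_{\Gamma_0} \bigl(g(D^*v_1) - g(D^*v_2)\bigr)\bigl(D^*v_1 - D^*v_2\bigr)\,\mathrm{d}\sigma,
\]
the two copies of $(u_1 - u_2, v_1 - v_2)_{L^2(\Omega)}$ cancelling; this is $\geq 0$ because $g$ is nondecreasing.

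\emph{Maximality.} By Minty's theorem it suffices to solve $[u,v] + \mathcal{A}[u,v] = [f,h]$ for arbitrary $[f,h] \in \mathcal{H}$. The first component forces $u = f + v$; writing $w \triangleq u + DPg(D^*v)$, the second reads $v + Aw = h$, hence $w = A^{-1}(h - v) \in H^1_0(\Omega)$ (as $h - v \in H^{-1}(\Omega)$). Eliminating $u$ and $w$ leaves, in $L^2(\Omega)$, the equation
\[
\mathcal{B}v \triangleq v + A^{-1}v + DPg(D^*v) = A^{-1}h - f,
\]
whose right-hand side lies in $L^2(\Omega)$ since $A^{-1}h \in H^1_0(\Omega)$. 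Now $\mathcal{B}$ is Lipschitz (sum of the bounded operator $I + A^{-1}$ and the Lipschitz map $v \mapsto DPg(D^*v)$) and strongly monotone, with $(\mathcal{B}v_1 - \mathcal{B}v_2, v_1 - v_2)_{L^2(\Omega)} \geq \|v_1 - v_2\|_{L^2(\Omega)}^2$ (using $I + A^{-1} \geq I$ and the monotonicity of $v \mapsto DPg(D^*v)$ established above); by Zarantonello's theorem it is a bijection of $L^2(\Omega)$, which yields a unique $v$. Then $u = f + v \in L^2(\Omega)$ and $u + DPg(D^*v) = A^{-1}(h - v) \in H^1_0(\Omega)$, so $[u,v] \in \mathcal{D}(\mathcal{A})$, and $\mathcal{A}[u,v] = [-v,\, h - v]$ shows that $[u,v] + \mathcal{A}[u,v] = [f,h]$.

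\emph{Density and consequences.} Given $[u,v] \in \mathcal{H}$, pick $u_n, \phi_n \in \mathcal{C}^\infty_c(\Omega)$ with $u_n \to u$ in $L^2(\Omega)$ and $A\phi_n = -\Delta\phi_n \to v$ in $H^{-1}(\Omega)$ (possible because $\mathcal{C}^\infty_c(\Omega)$ is dense in $H^1_0(\Omega)$ and $A \colon H^1_0(\Omega) \to H^{-1}(\Omega)$ is an isomorphism), and set $v_n \triangleq A\phi_n$. By \eqref{eq:id-normal-derivative}, $D^*v_n = -\partial_\nu\phi_n = 0$ on $\Gamma$, hence $DPg(D^*v_n) = 0$ because $g(0) = 0$, so $u_n + DPg(D^*v_n) = u_n \in H^1_0(\Omega)$, $[u_n,v_n] \in \mathcal{D}(\mathcal{A})$, and $[u_n,v_n] \to [u,v]$ in $\mathcal{H}$. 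Thus $\mathcal{A}$ is densely defined and maximal monotone, and the generation theorem provides $\{\mathcal{S}_t\}$ on $\overline{\mathcal{D}(\mathcal{A})} = \mathcal{H}$, with strong solutions for data in $\mathcal{D}(\mathcal{A})$, generalized solutions for arbitrary data, and uniqueness from monotonicity; the first component of the abstract equation gives $u' = v$, so solutions are written $[u,u']$. For \eqref{eq:kato-bound}, contractivity yields $\|\mathcal{S}_{t+\tau}X_0 - \mathcal{S}_tX_0\|_{\mathcal{H}} \leq \|\mathcal{S}_\tau X_0 - X_0\|_{\mathcal{H}}$; dividing by $\tau$ and letting $\tau \downarrow 0$ shows $t \mapsto \|\mathcal{A}\mathcal{S}_tX_0\|_{\mathcal{H}}$ is nonincreasing. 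For \eqref{eq:energy-identity}, $u$ is absolutely continuous into $L^2(\Omega)$ and $u' = v$ into $H^{-1}(\Omega)$, so $\tfrac{\mathrm{d}}{\mathrm{d}t}\tfrac{1}{2}\|u\|_{L^2(\Omega)}^2 = (u,v)_{L^2(\Omega)}$ and, via $v' = -A(u + DPg(D^*v))$ and the identity $(Ah,z)_{H^{-1}(\Omega)} = (h,z)_{L^2(\Omega)}$, $\tfrac{\mathrm{d}}{\mathrm{d}t}\tfrac{1}{2}\|v\|_{H^{-1}(\Omega)}^2 = -(v,\, u + DPg(D^*v))_{L^2(\Omega)}$; summing, the terms $\pm(u,v)_{L^2(\Omega)}$ cancel, and the definitions of $D^*$, $P$ together with \eqref{eq:id-normal-derivative} (applied to $p = A^{-1}u'$) give \eqref{eq:energy-identity}.

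\emph{Main obstacle.} The delicate part is maximality: one must keep careful track of the scale $L^2(\Omega) \hookrightarrow H^{-1}(\Omega) \hookrightarrow \mathcal{D}(A^{-1})$ to verify that eliminating $u$ and $w$ really produces a well-posed equation in $L^2(\Omega)$ to which a surjectivity result for monotone operators applies (in particular that $A^{-1}h - f \in L^2(\Omega)$ and $A^{-1}(h - v) \in H^1_0(\Omega)$), and the monotonicity computation hinges on the same bookkeeping via the identity $(Ah,z)_{H^{-1}} = (h,z)_{L^2}$. The density step is easy once one notices that choosing $v_n$ with vanishing co-normal trace $D^*v_n$ annihilates the nonlinear term (here $g(0)=0$ is essential), and semigroup generation together with the Kato bound and the energy identity are then standard.
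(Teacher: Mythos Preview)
Your proof is correct and follows essentially the same route as the paper: identical monotonicity computation, identical density argument via $\mathcal{C}^\infty_c(\Omega)$ approximants with vanishing co-normal trace, and the same reduction of the range condition to a single monotone equation for $v$ in $L^2(\Omega)$. The only cosmetic differences are that you invoke Zarantonello's theorem (strongly monotone $+$ Lipschitz $\Rightarrow$ bijection) where the paper uses a more general surjectivity result from Showalter (monotone, bounded, hemicontinuous, coercive), and you spell out the derivation of \eqref{eq:kato-bound} from contractivity whereas the paper absorbs it into the citation of Kato's theorem.
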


\begin{rem}
If we also assume that, say, $|g(s)| \geq \alpha |s|$ for all $s \in \mathbb{R}$ and some $\alpha > 0$, the energy identity \eqref{eq:energy-identity} provides a uniform estimate of the $L^2(0, +\infty; L^2(\Gamma_0))$-norm of $\partial_\nu [A^{-1} u']$ for strong solutions. From there,  one can prove that \eqref{eq:energy-identity} holds for generalized solution as well by passing to the limit and recovering the traces $u_{|\Gamma}$ and $\partial_\nu [A^{-1} u']$ in $L^2(0, + \infty; L^2(\Gamma_0))$ -- see for instance \cite{chueshov_attractor_2002} for similar arguments in the Neumann case.
\end{rem}

\begin{proof}[Proof of Theorem \ref{th:had-wp}]
Once proven that $\mathcal{A}$ is maximal monotone, existence and uniqueness of strong and generalized solutions to \eqref{eq:pde-bc}-\eqref{eq:feedback-law}, together with the  appropriate semigroup properties, follow from Kato's theorem and standard nonlinear semigroup theory -- see, e.g.,  \cite[Chapter IV]{showalter_monotone_2013}.

\textbf{Step 1: Monotonicity.} Let $[u_1, v_1]$ and $[u_2, v_2]$ in $\mathcal{D}(\mathcal{A})$. Then,
\begin{equation} \begin{aligned}
(\mathcal{A}[u_1, v_1] - \mathcal{A}[u_2, v_2], [u_1, v_1] - [u_2, v_2])_\mathcal{H} = 
 - (v_1 - v_2, u_1 - u_2)_{L^2(\Omega)}  \\+ (A^{1/2}[u_1- u_2 +DPg(D^* v_1) -DPg(D^* v_2)], A^{-1/2}[v_1 - v_2])_{L^2(\Omega)}.
\end{aligned}
\end{equation}
Now we use that $A^{-1/2}[v_1 - v_2]$ belongs to $\mathcal{D}(A^{1/2})$ and that $A^{1/2}$ is self-adjoint to obtain 
\begin{equation}
\label{eq:AXX}
\begin{aligned}
(\mathcal{A}[u_1, v_1] - \mathcal{A}[u_2, v_2], [u_1, v_1] - [u_2, v_2])_\mathcal{H} 
&= (DPg(D^*v_1) - DPg(D^* v_2), v_1 - v_2)_{L^2(\Omega)} \\
&= (Pg(D^*v_1) - Pg(D^* v_2), D^*v_1 - D^*v_2)_{L^2(\Gamma)} \\
&= (g(D^*v_1) - g(D^* v_2), D^*v_1 - D^*v_2)_{L^2(\Gamma_0)}  \geq 0,
\end{aligned}
\end{equation}
the right-hand side being nonnegative by nondecreasingness of $g$, which proves that $\mathcal{A}$ is monotone.

\textbf{Step 2: Range condition.} Let  $\lambda > 0$ and $[f_1, f_2] \in \mathcal{H}$. To solve the equation
$
\mathcal{A}[u, v] + \lambda [u, v] = [f_1, f_2],
$
it suffices to find $v \in L^2(\Omega)$ such that
\begin{equation}
\label{eq:inconnue-v-L2}
\lambda^{-1}v + DPg(D^*v) + \lambda A^{-1}v = A^{-1}f_2 - \lambda^{-1}f_1.
\end{equation}
This is seen by substituting  $- v + \lambda u = f_1$ into the second coordinate of the equation and applying $A^{-1}$ to the result. If such an element $v \in L^2(\Omega)$ is found, then $[u, v]$ belongs to $\mathcal{D}(\mathcal{A})$ (and solves the desired equation). Indeed, we then have  $u + DPg(D^*v) + \lambda A^{-1}v = A^{-1}f_2$, which implies that $u + DPg(D^*v) \in H^1_0(\Omega)$ since $A^{-1}f_2$ and $\lambda A^{-1}v$ both belong to $H^1_0(\Omega)$.
%

We define a nonlinear operator $\Theta$ on $L^2(\Omega)$ by
$
\Theta(v) \triangleq \lambda^{-1}v + DPg(D^*v) + \lambda A^{-1}v
$ for all $v \in L^2(\Omega)$.
Then, $\Theta$ enjoys the following properties:
\begin{enumerate}[label=(\roman*)]
\item $\Theta$ maps bounded sets into bounded sets;
\item $(\Theta(v_1) - \Theta(v_2), v_1 - v_2)_{L^2(\Omega)}  \geq 0$ for all $v_1$ and $v_2$ in $L^2(\Omega)$;
\item The scalar function $t \mapsto  (\Theta(v_1 + t v_2), v_2 )_{L^2(\Omega)}$ is continuous for all $v_1$ and $v_2$ in $L^2(\Omega)$.
\end{enumerate}
Also, we have
\begin{equation}
(\Theta(v), v)_{L^2(\Omega)} \geq \lambda^{-1}\| v\|_{L^2(\Omega)} ^2 \quad \mbox{for all}~ v \in L^2(\Omega).
\end{equation}
Thus, it follows from  \cite[Lemma 2.1 and Theorem 2.1]{showalter_monotone_2013} that $\Theta$ is onto. Consequently, the equation $\mathcal{A}[u, v] + \lambda [u, v] = [f_1, f_2]$ has a solution in $\mathcal{D}(\mathcal{A})$.

\textbf{Step 3: Denseness of the domain.} Let $[u, v] \in \mathcal{H}$ and $\epsilon > 0$. Since $A^{-1}v \in H^1_0(\Omega)$ and $\mathcal{C}_c^\infty (\Omega)$ is dense in $H^1_0(\Omega)$, we can pick $\phi \in \mathcal{C}^\infty_c(\Omega)$ such that 

\begin{equation} 
\|A^{-1}v - \phi\|^2_{H^1_0(\Omega)} \leq \epsilon,
\quad \mbox{and thus}~  \|v - A \phi \|^2_{H^{-1}(\Omega)} \leq  C \epsilon
 \end{equation}
 where $C > 0$ comes from $A \in \mathcal{L}(H^1_0(\Omega), H^{-1}(\Omega))$. 
Besides, there exists $\psi \in \mathcal{C}^\infty_c(\Omega)$ such that $\| u - \psi \|^2_{L^2(\Omega)} \leq \epsilon$.
Since $\phi \in \mathcal{C}^\infty_c(\Omega) \subset \mathcal{D}(A)$, we have $A\phi \in L^2(\Omega)$ and also, using \eqref{eq:id-normal-derivative}, $g(D^*A\phi) = g(- \partial_\nu \phi) = 0$. Thus, $[\psi, A\phi] \in \mathcal{D}(\mathcal{A})$; also, we have $\| [u, v] - [\psi, A\phi]\|_\mathcal{H}^2 \leq (1 + C)\epsilon$. It is now proved that $\mathcal{D}(\mathcal{A})$ is dense in $\mathcal{H}$.

\textbf{Step 4: Energy identity.} Let $[u, v]$ be a strong solution to \eqref{eq:pde-bc}-\eqref{eq:feedback-law}. 
We recall that
$
\mathcal{E}(u, u') = \frac{1}{2} \| [u, u']\|^2_\mathcal{H}.
$
Consequently, by the chain rule, $\mathcal{E}(u, u')$ is an absolutely continuous scalar function and
\begin{equation}
\frac{\mathrm{d}}{\mathrm{d}t} \mathcal{E}(u, u') = (-\mathcal{A}[u, u'], [u, u'])_\mathcal{H} \quad \mbox{a.e.}
\end{equation}
Thus, the desired identity \eqref{eq:energy-identity} follows from \eqref{eq:AXX}.
\end{proof}

\subsection{Additional properties of the semigroup}

Now, we establish some compactness and regularity properties that are useful in the proof of the stability results presented in Subsections \ref{subsec:as} and \ref{subsec:pdr}. We start by introducing the following proposition, which enables us to prove  asymptotic stability of the feedback system \eqref{eq:pde-bc}-\eqref{eq:feedback-law} using LaSalle's invariance principle.
\begin{prop}[Compactness]
\label{prop:compact}
For any $\lambda > 0$, the (nonlinear) resolvent operator $(\mathcal{A} + \lambda \mathrm{id})^{-1}$ is well-defined on $\mathcal{H}$ and compact. In particular, for all initial data $[u_0, v_0] \in \mathcal{H}$, the (semi)trajectory $\{\mathcal{S}_t[u_0, v_0]\}_{t\geq 0}$ is relatively compact in $\mathcal{H}$.
\end{prop}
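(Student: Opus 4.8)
The plan is to show that for fixed $\lambda > 0$ the resolvent $(\mathcal{A}+\lambda\,\mathrm{id})^{-1}$ maps bounded subsets of $\mathcal{H}$ into relatively compact subsets of $\mathcal{H}$, and then deduce relative compactness of trajectories from the standard semigroup argument. First I would recall from Step 2 of the proof of Theorem \ref{th:had-wp} that solving $(\mathcal{A}+\lambda\,\mathrm{id})[u,v]=[f_1,f_2]$ amounts to solving \eqref{eq:inconnue-v-L2} for $v\in L^2(\Omega)$, after which $u$ is recovered from $\lambda u = v + f_1$ and one has the key identity $u + DPg(D^*v) + \lambda A^{-1}v = A^{-1}f_2$, hence $u + DPg(D^*v)\in H^1_0(\Omega)$. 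So it suffices to establish a priori bounds that gain compactness in each coordinate.

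The main quantitative step is to test \eqref{eq:inconnue-v-L2} against $v$: using the coercivity estimate $(\Theta(v),v)_{L^2(\Omega)}\geq \lambda^{-1}\|v\|_{L^2(\Omega)}^2$ together with Cauchy--Schwarz on the right-hand side, one gets $\|v\|_{L^2(\Omega)}\leq C(\lambda)\big(\|A^{-1}f_2\|_{L^2(\Omega)}+\|f_1\|_{L^2(\Omega)}\big)$, which is controlled by $\|[f_1,f_2]\|_{\mathcal{H}}$ since $A^{-1}\in\mathcal{L}(H^{-1}(\Omega),H^1_0(\Omega))\hookrightarrow\mathcal{L}(H^{-1}(\Omega),L^2(\Omega))$. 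With $v$ bounded in $L^2(\Omega)$: since $D^*\in\mathcal{L}(L^2(\Omega),H^{1/2}(\Gamma))$ by \eqref{eq:properties-D}, $D^*v$ is bounded in $H^{1/2}(\Gamma)$; as $g$ is globally Lipschitz with $g(0)=0$, $g(D^*v)$ is bounded in $L^2(\Gamma)$ — but I actually want more, so I would instead use that $g$ maps $H^{1/2}(\Gamma)$-bounded sets into $H^{1/2}(\Gamma)$-bounded sets (Lipschitz Nemytskii operators act boundedly on $H^{1/2}$ in dimension $d-1$ with the relevant Sobolev range, or one may simply invoke that a globally Lipschitz scalar map with $g(0)=0$ acts boundedly on $H^{s}$ for $0\leq s\leq 1$; this is where a little care is needed). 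Then $DPg(D^*v)$ is bounded in $H^1(\Omega)$, whence from the key identity $u = A^{-1}f_2 - \lambda A^{-1}\cdot v - DPg(D^*v)$ we read off that $u$ is bounded in $H^1(\Omega)$ — the term $A^{-1}f_2$ lies in $H^1_0(\Omega)$ bounded by $\|f_2\|_{H^{-1}(\Omega)}$, the term $A^{-1}v$ lies in $\mathcal{D}(A)\subset H^1(\Omega)$ bounded by $\|v\|_{L^2(\Omega)}$. Similarly $v$ is bounded in $L^2(\Omega)$. So $(\mathcal{A}+\lambda\,\mathrm{id})^{-1}$ sends bounded sets into bounded subsets of $H^1(\Omega)\times L^2(\Omega)$, which by Rellich's theorem embeds compactly into $L^2(\Omega)\times H^{-1}(\Omega)=\mathcal{H}$. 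This gives compactness of the resolvent.

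For the last assertion, I would invoke the classical fact that for a semigroup of nonlinear contractions generated by $-\mathcal{A}$ with $\mathcal{A}$ maximal monotone, compactness of one (hence every) resolvent $(\mathcal{A}+\lambda\,\mathrm{id})^{-1}$ implies that the semigroup $\{\mathcal{S}_t\}$ is compact for $t>0$, or at least that bounded trajectories are relatively compact; concretely, since $\{\mathcal{S}_t[u_0,v_0]\}_{t\geq 0}$ is bounded in $\mathcal{H}$ (the semigroup is contractive, so $\|\mathcal{S}_t[u_0,v_0]\|_{\mathcal{H}}\leq\|[u_0,v_0]\|_{\mathcal{H}}$), and the trajectory stays in the closed ball, one combines boundedness with the compactness of the resolvent via the standard argument (see e.g. the treatment in the references cited for Theorem \ref{th:had-wp}, or \cite[Chapter IV]{showalter_monotone_2013}) to conclude relative compactness of the orbit in $\mathcal{H}$. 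The main obstacle I anticipate is the boundedness of the Nemytskii operator $f\mapsto g(f)$ on $H^{1/2}(\Gamma)$ — for a merely Lipschitz $g$ this requires the fractional-order chain/composition estimate rather than a naive $L^2$ bound; if that step is delicate one can fall back on the weaker bound $g(D^*v)\in L^2(\Gamma)$ giving $DPg(D^*v)\in H^{1/2}(\Omega)$ only, and then note that $u$ is still compact in $L^2(\Omega)$ because $H^{1/2}(\Omega)\hookrightarrow\hookrightarrow L^2(\Omega)$ and $\mathcal{D}(A)\hookrightarrow\hookrightarrow L^2(\Omega)$, which is all that is needed for compactness into $\mathcal{H}$.
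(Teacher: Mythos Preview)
Your fallback route is correct and in fact slightly more direct than the paper's: from the identity $u = A^{-1}f_2 - \lambda A^{-1}v - DPg(D^*v)$ you read off $u \in H^{1/2}(\Omega)$ term by term (the first two via $H^1_0(\Omega),\, \mathcal{D}(A) \hookrightarrow H^{1/2}(\Omega)$, the last via $D \in \mathcal{L}(L^2(\Gamma), H^{1/2}(\Omega))$ and the Lipschitz bound $\|g(D^*v)\|_{L^2(\Gamma)} \leq K\|v\|_{L^2(\Omega)}$), and then use $H^{1/2}(\Omega)\times L^2(\Omega) \hookrightarrow\hookrightarrow \mathcal{H}$. The paper reaches the same target space $H^{1/2}(\Omega)\times L^2(\Omega)$ but by a different mechanism: it identifies $-\Delta u = f_2 - \lambda v \in H^{-1}(\Omega)$ and $u_{|\Gamma} = -Pg(D^*v) \in L^2(\Gamma)$, and then applies an elliptic regularity theorem of Lions to conclude $u \in H^{1/2}(\Omega)$. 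Your algebraic decomposition bypasses the elliptic theory entirely.

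Your \emph{primary} approach, however, has a genuine obstruction: to land $DPg(D^*v)$ in $H^1(\Omega)$ you need $Pg(D^*v) \in H^{1/2}(\Gamma)$, and the projection $P$ (multiplication by $\mathds{1}_{\Gamma_0}$) does \emph{not} preserve $H^{1/2}(\Gamma)$ in general --- a characteristic function with a jump along $\partial\Gamma_0 \subset \Gamma$ destroys half a derivative. The paper obtains $P \in \mathcal{L}(H^{1/2}(\Gamma))$ only later (Proposition~\ref{prop:reg}), and only under the extra hypothesis $\overline{\Gamma_0} \cap \overline{\Gamma_1} = \emptyset$, which is not assumed in Proposition~\ref{prop:compact}. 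So your fallback is not a convenience but the argument that actually works at this level of generality; the Nemytskii issue you flagged for $g$ on $H^{1/2}$ is secondary to the $P$ issue.

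Two minor points. First, ``well-defined'' includes uniqueness of the resolvent; the paper proves this explicitly, though it is immediate from monotonicity of $\mathcal{A}$. Second, for orbit compactness the paper cites a precise result of Dafermos--Slemrod (compact resolvent together with $\mathcal{A}(0)=0$ implies precompact trajectories), which is what your ``standard argument'' should point to.
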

\begin{proof}
Assume for a moment that
$(\mathcal{A} + \lambda \mathrm{id})^{-1}$ is well-defined and compact for some $\lambda >0$. Then, since $\mathcal{A}(0) = 0$, relative compactness of the trajectories follows from \cite[Theorem 3]{dafermos_asymptotic_1973}.

Let $\lambda > 0$. We already know from the proof of Theorem \ref{th:had-wp} that the equation
 \begin{equation}
\label{eq:onto}
\mathcal{A}[u, v] = [f_1, f_2] \end{equation} 
has a solution in $\mathcal{D}(\mathcal{A})$ for all $[f_1, f_2] \in \mathcal{H}$.

\textbf{Step 1: Uniqueness.} Consider two solutions $[u_1, v_1]$ and $[u_2, v_2]$ to \eqref{eq:onto}. Then, we recall from \eqref{eq:inconnue-v-L2} in the proof of Theorem \ref{th:had-wp} that
\begin{equation}
\label{eq:form-uniqueness}
\lambda^{-1}[v_1 - v_2] + DP[g(D^*v_1) - g(D^*v_2)] + \lambda A^{-1}[v_1 - v_2] = 0.
\end{equation}
Taking the scalar product in $L^2(\Omega)$ of \eqref{eq:form-uniqueness} with $v_1 - v_2$ yields 
\begin{equation}
\label{eq:norme-diff}
\lambda^{-1} \|v_1 - v_2\|^2_{L^2(\Omega)} + (g(D^*v_1) - g(D^*v_2), D^*v_1 - D^*v_2)_{L^2(\Gamma_0)} + \lambda \|v_1 - v_2\|^2_{H^{-1}(\Omega)} = 0.
\end{equation}
In particular, since $g$ is nondecreasing, we infer from \eqref{eq:norme-diff} that $v_1 = v_2$; thus, $[u_1, v_1] = [u_2, v_2]$ and $(\mathcal{A} + \lambda \mathrm{id})^{-1}$ is well-defined.

\textbf{Step 2: Compactness of the resolvent operator.} In what follows, we let $[u, v] \triangleq (\mathcal{A} + \lambda \mathrm{id})^{-1}[f_1, f_2]$ and we look for estimates of $[u, v] \in \mathcal{D}(\mathcal{A})$ in stronger norms. First, as in the previous step, we obtain
\begin{equation}
\label{eq:ell-est}
\lambda^{-1} \|v \|^2_{L^2(\Omega)} + (g(D^*v), D^*v)_{L^2(\Gamma_0)} + \lambda \|v\|^2_{H^{-1}(\Omega)} =  (A^{-1/2}f_2, A^{-1/2}v)_{L^2(\Omega)} - \lambda^{-1}(f_1, v)_{L^2(\Omega)},
\end{equation}
where it is used that $A^{-1/2}$ is self-adjoint. From \eqref{eq:ell-est}, using Cauchy-Schwarz and Young inequalities with appropriate constants, we obtain the estimate
\begin{equation}
\label{eq:stronger-est-v}
 \|v \|^2_{L^2(\Omega)} \leq \frac{1}{2}\|f_2\|^2_{H^{-1}(\Omega)} +  \|f_1\|^2_{L^2(\Omega)}.
\end{equation}

The remainder of the proof relies on elliptic regularity theory and in particular \cite[Th\'eor\`eme 10.1]{lions_problemes_1961_ii}. Since $[u, v] \in \mathcal{D}(\mathcal{A})$, we know that $u + DPg(D^*v)$ belongs to $H^1_0(\Omega)$ and
\begin{equation}
\label{eq:var-u-v}
u + DPg(D^*v) = A^{-1}f_2 - \lambda A^{-1}v.
\end{equation}
Picking an arbitrary  test function $\phi$ in $\mathcal{C}^\infty_c(\Omega) \subset \mathcal{D}(A)$, taking the scalar product in $L^2(\Omega)$ of \eqref{eq:var-u-v} with $A\phi$ and using again that $D^*A\phi = - \partial_\nu \phi = 0$ leads to
$- \Delta u = f_2 - \lambda v
$ in the sense of distributions on $\Omega$.
Besides, since $- \Delta u \in H^{-1}(\Omega)$, $u_{|\Gamma}$ is well-defined in $H^{-1/2}(\Gamma)$; then, we infer from $u + DPg(D^*v) \in H^1_0(\Omega)$ that $u_{|\Gamma} = - Pg(D^*v) \in L^2(\Gamma)$. Applying the aforementioned theorem, we obtain $u \in H^{1/2}(\Omega)$ along with the estimate
\begin{equation}
\label{eq:regularity-est}
\|u\|_{H^{1/2}(\Omega)}^2 \leq C_1  \left \{  \|Pg(D^*v) \|^2_{L^2(\Gamma)} + \|f_2 - \lambda v\|^2_{H^{-1}(\Omega)}\right \}
\end{equation}
where $C_1 > 0$ is solution independent. Since $P$ and $g$ are Lipschitz continuous on $L^2(\Gamma)$, $g(0) = 0$ and $D^*$ is linear continuous from $L^2(\Omega)$ into $L^2(\Gamma)$, plugging \eqref{eq:stronger-est-v} into \eqref{eq:regularity-est} yields
\begin{equation}
\label{eq:stronger-estimate-u}
\|u\|^2_{H^{1/2}(\Omega)} \leq C_1 C_2 \|v\|^2_{L^2(\Omega)} + 2C_1\lambda^2 \|v\|^2_{H^{-1}(\Omega)} 
 + 2C_1 \|f_2\|^2_{H^{-1}(\Omega)},
\end{equation}
where $C_2 > 0$ is some other constant.

Combining \eqref{eq:stronger-est-v} and \eqref{eq:stronger-estimate-u}, we see that $(\mathcal{A} + \lambda \mathrm{id})^{-1}$ maps bounded sets of $\mathcal{H} = L^2(\Omega) \times H^{-1}(\Omega)$ into bounded sets of $H^{1/2}(\Omega) \times L^2(\Omega)$, the latter being compactly embedded into $\mathcal{H}$. Thus, the result is proved.
\end{proof}

The next proposition is meant for use in Section \ref{sec:pdr}, where we work under additional assumptions on $\Omega$; however, since it is a direct continuation of the proof of Proposition \ref{prop:compact}, we introduce it here.
\begin{prop}[Regularity]
\label{prop:reg}
 Suppose that $\overline{\Gamma_0} \cap \overline{\Gamma_1} = \emptyset$. Then, the following explicit characterization of $\mathcal{D}(\mathcal{A})$ holds:
\begin{equation}
\label{eq:carac-domain}
\mathcal{D}(\mathcal{A}) =  \left \{ [u, v] \in \mathcal{H} : v \in L^2(\Omega), u \in H^1(\Omega), u_{|\Gamma} = -  \mathds{1}_{\Gamma_0}g (D^*v)  \right \}.
\end{equation}
Thus, strong solutions $[u, u']$ take values in $H^1(\Omega) \times L^2(\Omega)$. Furthermore, there exists a constant $K > 0$ such that any strong solution to \eqref{eq:pde-bc}-\eqref{eq:feedback-law} satisfies
\begin{equation}
\label{eq:bound-H1-L2}
\|[u(t), u'(t)]\|_{H^1(\Omega) \times L^2(\Omega)} \leq K \|\mathcal{A}[u(0), u'(0)]\|_\mathcal{H} \quad \mbox{for all}~ t \geq 0.
\end{equation}
\end{prop}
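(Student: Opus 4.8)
The key point is that under the separation hypothesis $\overline{\Gamma_0}\cap\overline{\Gamma_1}=\emptyset$, the boundary datum $u_{|\Gamma}=-\mathds 1_{\Gamma_0}g(D^*v)$ is no longer merely $L^2(\Gamma)$ but actually $H^{1/2}(\Gamma)$: indeed $g(D^*v)\in H^{?}(\Gamma_0)$ only locally, but multiplying by the indicator $\mathds 1_{\Gamma_0}$, whose support is separated from $\Gamma_1$, does not create a jump, so the globally defined function $\mathds 1_{\Gamma_0}g(D^*v)$ lies in $H^{1/2}(\Gamma)$ whenever $g(D^*v)$ does locally near $\overline{\Gamma_0}$. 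I would make this precise by choosing a cutoff $\chi\in\mathcal C^\infty(\Gamma)$ with $\chi\equiv 1$ on a neighbourhood of $\overline{\Gamma_0}$ and $\chi\equiv 0$ on a neighbourhood of $\overline{\Gamma_1}$, so that $\mathds 1_{\Gamma_0}g(D^*v)=\chi\,g(D^*v)$ as elements of $L^2(\Gamma)$; then multiplication by $\chi$ maps $H^s(\Gamma)$ continuously into the subspace of $H^s(\Gamma)$ of functions vanishing near $\Gamma_1$, with norm control independent of $v$.

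First I would establish the inclusion ``$\supseteq$'' in \eqref{eq:carac-domain}: if $[u,v]\in\mathcal H$ with $v\in L^2(\Omega)$, $u\in H^1(\Omega)$ and $u_{|\Gamma}=-\mathds 1_{\Gamma_0}g(D^*v)$, then $w\triangleq u+DPg(D^*v)$ has zero trace (using $D\in\mathcal L(L^2(\Gamma),L^2(\Omega))$ and the trace identity $(Pg(D^*v))_{|\Gamma}=\mathds 1_{\Gamma_0}g(D^*v)$), hence lies in $H^1_0(\Omega)$, which by the equivalent description of $\mathcal D(\mathcal A)$ given just after the definition of $\mathcal A$ means $[u,v]\in\mathcal D(\mathcal A)$. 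For the reverse inclusion ``$\subseteq$'', I would resume the computation from the proof of Proposition \ref{prop:compact}: for $[u,v]\in\mathcal D(\mathcal A)$ we have $-\Delta u=f_2-\lambda v\in H^{-1}(\Omega)$ (taking $[f_1,f_2]\triangleq(\mathcal A+\lambda\,\mathrm{id})[u,v]$) and $u_{|\Gamma}=-Pg(D^*v)=-\mathds 1_{\Gamma_0}g(D^*v)=-\chi g(D^*v)$. Now $D^*v\in H^{1/2}(\Gamma)$ by \eqref{eq:properties-D}, hence $g(D^*v)\in H^{1/2}(\Gamma)$ locally near $\overline{\Gamma_0}$ — more precisely $\chi g(D^*v)\in H^{1/2}(\Gamma)$ because $g$ is globally Lipschitz and multiplication by the smooth cutoff $\chi$ tames the behaviour near $\partial\Gamma_0$ — so $u_{|\Gamma}\in H^{1/2}(\Gamma)$. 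By elliptic regularity (the same theorem of Lions–Magenes already invoked), $-\Delta u\in H^{-1}(\Omega)$ together with $u_{|\Gamma}\in H^{1/2}(\Gamma)$ gives $u\in H^1(\Omega)$ with
\begin{equation}
\label{eq:reg-plan}
\|u\|_{H^1(\Omega)}^2\leq C\bigl\{\|\chi g(D^*v)\|_{H^{1/2}(\Gamma)}^2+\|f_2-\lambda v\|_{H^{-1}(\Omega)}^2\bigr\}.
\end{equation}

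Next I would derive the uniform bound \eqref{eq:bound-H1-L2}. Writing $[f_1,f_2]=\mathcal A[u(0),u'(0)]$ (the case $\lambda=0$, legitimate by the range/uniqueness argument of Proposition \ref{prop:compact}, with $[u,v]=[u(t),u'(t)]$ replaced appropriately — actually one applies \eqref{eq:reg-plan} at each fixed time $t$ with $f_2-\lambda v$ replaced by $-\Delta u(t)=(u'(t))'$ viewed via the equation, i.e. $-\Delta u(t)$ equals the second component of $-\mathcal A[u(t),u'(t)]$ in $H^{-1}(\Omega)$). The Kato bound \eqref{eq:kato-bound} gives $\|\mathcal A[u(t),u'(t)]\|_{\mathcal H}\leq\|\mathcal A[u(0),u'(0)]\|_{\mathcal H}$, which controls both $\|u'(t)\|_{L^2(\Omega)}$ (first component of $\mathcal A[u,u']$) and $\|{-}\Delta u(t)\|_{H^{-1}(\Omega)}$; together with $\|v(t)\|_{L^2(\Omega)}=\|u'(t)\|_{L^2(\Omega)}$ and the Lipschitz bound $\|\chi g(D^*u'(t))\|_{H^{1/2}(\Gamma)}\leq C\|D^*u'(t)\|_{H^{1/2}(\Gamma)}\leq C'\|u'(t)\|_{L^2(\Omega)}$ (using $D^*\in\mathcal L(L^2(\Omega),H^{1/2}(\Gamma))$ and that multiplication by $\chi$ followed by the Nemytskii map of the Lipschitz $g$ is bounded on $H^{1/2}$ thanks to the cutoff), feeding these into \eqref{eq:reg-plan} yields \eqref{eq:bound-H1-L2} with a constant $K$ depending only on $\Omega$, $\Gamma_0$, $\chi$ and the Lipschitz constant of $g$.

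The main obstacle is the claim that the Nemytskii operator $v\mapsto\chi\,g(D^*v)$ is bounded from $L^2(\Omega)$ into $H^{1/2}(\Gamma)$. Composition with a merely Lipschitz $g$ does not in general preserve $H^{1/2}$ regularity in dimension $d-1\geq 1$ of the boundary (fractional Sobolev spaces of order $<1$ are not stable under Lipschitz superposition when $s(d-1)$ is too large relative to integrability), so one cannot simply say ``$g$ Lipschitz $\Rightarrow g\circ(\cdot)$ bounded on $H^{1/2}$''. The correct route, which I would spell out, is: $H^{1/2}(\Gamma)\hookrightarrow$ some $L^q(\Gamma)$, and one only needs the weaker conclusion that $u\in H^{1/2}(\Omega)$ — wait, the statement demands $H^1(\Omega)$. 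Re-examining: because $g$ is globally Lipschitz, $\|g(\varphi_1)-g(\varphi_2)\|_{H^{1/2}(\Gamma)}\leq \mathrm{Lip}(g)\|\varphi_1-\varphi_2\|_{H^{1/2}(\Gamma)}$ does hold on the half-integer space $H^{1/2}$ by the characterization via the Gagliardo seminorm — $|g(\varphi)(x)-g(\varphi)(y)|\leq\mathrm{Lip}(g)|\varphi(x)-\varphi(y)|$ pointwise, so the Gagliardo double integral is dominated termwise — and $g(0)=0$ controls the $L^2$ part; the cutoff $\chi$ is what turns $\mathds 1_{\Gamma_0}g(D^*v)$ (which has a jump across $\partial\Gamma_0$ and hence is \emph{not} in $H^{1/2}$) into the genuinely $H^{1/2}(\Gamma)$ function $\chi g(D^*v)$, using $\chi g(D^*v)=\mathds 1_{\Gamma_0}g(D^*v)$ pointwise because $g(D^*v)$ need not vanish on $\Gamma_1$ but $\chi$ and $\mathds 1_{\Gamma_0}$ agree on $\mathrm{supp}$. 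So the delicate point is precisely this identification $\mathds 1_{\Gamma_0}g(D^*v)=\chi g(D^*v)$ in $L^2(\Gamma)$ and verifying it gives an $H^{1/2}(\Gamma)$ element; this is exactly where $\overline{\Gamma_0}\cap\overline{\Gamma_1}=\emptyset$ is used and must not be glossed over.
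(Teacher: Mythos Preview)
Your approach is essentially the same as the paper's: both inclusions are handled the same way, the Gagliardo-seminorm argument showing that a globally Lipschitz $g$ with $g(0)=0$ maps $H^{1/2}(\Gamma)$ boundedly into itself is exactly what the paper does, and the final estimate \eqref{eq:bound-H1-L2} is obtained by combining elliptic regularity with the Kato bound \eqref{eq:kato-bound}, as you outline.

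The one place where you differ is in handling the projection $P$. The paper shows directly that $P\in\mathcal L(H^{1/2}(\Gamma))$ via the Gagliardo double integral: since $\overline{\Gamma_0}\cap\overline{\Gamma_1}=\emptyset$ and $\Gamma$ is compact, there is $m>0$ with $\operatorname{dist}(\sigma_1,\sigma_2)\geq m$ whenever $\sigma_1\in\Gamma_0$, $\sigma_2\in\Gamma_1$, so the cross terms in the seminorm are bounded by a multiple of the $L^2$ norm. Your cutoff route works too, but the ``delicate identification'' you worry about at the end is in fact trivial once you notice that the separation hypothesis forces $\Gamma_0$ to be \emph{clopen} in $\Gamma$: $\Gamma_0$ is open by assumption, and $\overline{\Gamma_0}\cap\Gamma_1=\emptyset$ gives $\overline{\Gamma_0}\subset\Gamma\setminus\Gamma_1=\Gamma_0$, so $\Gamma_0$ is closed. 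Hence $\mathds 1_{\Gamma_0}$ is locally constant (thus smooth) on $\Gamma$, and any cutoff $\chi$ with the properties you require is necessarily equal to $\mathds 1_{\Gamma_0}$ on all of $\Gamma$. There is no transition region and no identification to verify; your $\chi g(D^*v)$ \emph{is} $Pg(D^*v)$ by construction. With this observation, your argument and the paper's collapse to the same thing.
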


\begin{rem}
\label{rem:ineq-E-A}
Since $H^1(\Omega) \times L^2(\Omega)$ is continuously embedded into $\mathcal{H}$, it follows
from \eqref{eq:bound-H1-L2} evaluated at $t = 0$ that for some constant $K' > 0$,
\begin{equation}
\label{eq:ineq-E-A}
\mathcal{E}(u_0, v_0) \leq K' \| \mathcal{A}[u_0, v_0] \|_\mathcal{H}^2 \quad \mbox{for all}~ [u_0, v_0] \in \mathcal{D}(\mathcal{A}).
\end{equation}

\end{rem}

\begin{proof}[Proof of Proposition \ref{prop:reg}]
Let $u \in H^1(\Omega)$ and $v \in L^2(\Omega)$ such that $u_{|\Gamma} = - \mathds{1}_{\Gamma_0} g(D^*v) = - Pg(D^*v)$. By trace regularity, $Pg(D^*v) \in H^{1/2}(\Gamma)$, and by \eqref{eq:properties-D}, $DPg(D^*v) \in H^1(\Omega)$.  It follows that $u + DPg(D^*v) \in H^1_0(\Omega)$, i.e., $[u, v] \in \mathcal{D}(\mathcal{A})$.


Conversely, let $[u, v] \in \mathcal{D}(\mathcal{A})$. Recalling  calculations made in Proposition \ref{prop:compact}, we already know that $[u, v]$ must satisfy 
$- \Delta u  \in H^{-1}(\Omega)$ and $u_{|\Gamma} = - Pg(D^*v)$.
Therefore,  in comparison  with the proof of Proposition \ref{prop:compact}, it suffices to show that $u_{|\Gamma}$ belongs to $H^{1/2}(\Gamma)$ instead of $L^2(\Gamma)$ and apply the elliptic regularity theorem to gain the desired extra half-unit of regularity. By virtue of \eqref{eq:properties-D}, we have $D^*v \in H^{1/2}(\Gamma)$. 

First, recall that pointwise Lipschitz nonlinearities  such as $g$ map bounded sets of $H^{1/2}(\Gamma)$ into bounded sets of $H^{1/2}(\Gamma)$. Indeed,
using the definition of Sobolev spaces on manifold by local charts and the Sobolev-Slobodeckij characterization of the fractional spaces $H^s(\mathbb{R}^{d-1})$ (see \cite{di_nezza_hitchhikers_2012}), we know that for a given $f$ in $H^{1/2}(\Gamma)$, $g(f)\in H^{1/2}(\Gamma)$ if and only if
\begin{equation}
\label{eq:frac-intr}
\iint_{\mathbb{R}^{d-1} \times \mathbb{R}^{d-1}} \frac{|\phi_i(x_1) g([f \circ \psi_i ](x_1)) - \phi_i (x_2)g([f \circ \psi_i](x_2))|^2}{\|x_1 - x_2 \|^d} \, \mathrm{d}x_1 \, \mathrm{d}x_2 < + \infty,
\end{equation}
for all suitable $(\phi_i, \psi_i)$, where the functions $\phi_i \in \mathcal{C}^\infty_c(\mathbb{R}^{d-1})$ are chosen from a partition of unity subordinate to some (finite) covering of $\Gamma$ and the functions $\psi_i$ are corresponding local representations of the surface.

The integral term in \eqref{eq:frac-intr} is finite because $g$ and $\phi_i$ are globally Lipschitz continuous; hence, $g(f) \in H^{1/2}(\Omega)$. Furthermore, taking 
  the integral term in \eqref{eq:frac-intr} 
plus some appropriate lower-order $L^2$-term
   defines a norm on $H^{1/2}(\mathbb{R}^{d-1})$ equivalent to the one given by interpolation. Thus, after coming back to functions on $\Gamma$, it follows from \eqref{eq:frac-intr} that
\begin{equation}
\label{eq:str-cont-g}
\| g(f) \|_{H^{1/2}(\Gamma)} \leq K \|f\|_{H^{1/2}(\Gamma)} \quad \mbox{for all}~ f \in H^{1/2}(\Gamma),
\end{equation}
where $K$ is some positive constant coming from the Lipschitz continuity of $g$ and norm equivalence. 

Next we have to check that $P \in \mathcal{L}(H^{1/2}(\Gamma))$. Again, this is a consequence of \eqref{eq:frac-intr}: we observe that since $\overline{\Gamma_0} \cap \overline{\Gamma_1} = \emptyset$, there exists $m > 0$ such that $\|x_1 - x_2\| > m$ whenever $(\psi_i(x_1), \psi_i(x_2)) \in [\Gamma_0 \times \Gamma_1] \cup [\Gamma_1 \times \Gamma_0]$.


Finally, combining \eqref{eq:properties-D} for $s = 1/2$, the estimate \eqref{eq:str-cont-g}, the fact that $ P \in \mathcal{L}(H^{1/2}(\Gamma))$ together with the elliptic regularity theorem, we obtain $u \in H^1(\Omega)$ and the stronger estimate
\begin{equation}
\label{eq:est-H1}
\begin{aligned}
\|u\|_{H^1(\Omega)}& \leq C \left \{ \|\Delta u \|_{H^{-1}(\Omega)} + \|v \|_{L^2(\Omega)}\right \}  \\
&= C \left \{ \|A[u + DPg(D^*v)]\|_{H^{-1}(\Omega)} + \|v \|_{L^2(\Omega)} \right \}
\leq C' \|\mathcal{A}[u, v]\|_\mathcal{H}
\end{aligned}
\end{equation}
where $C$ and $C'$ are some positive constants that do not depend on $[u, v]$. The set equality in \eqref{eq:carac-domain} is now proved and the property \eqref{eq:bound-H1-L2} readily follows from \eqref{eq:est-H1} and \eqref{eq:kato-bound}.
\end{proof}

\subsection{Asymptotic stability}
\label{subsec:as}

Next, we state the second main result of the section, which asserts that the zero equilibrium of the closed-loop system \eqref{eq:pde-bc}-\eqref{eq:feedback-law} is globally asymptotically stable.

\begin{theo}[Asymptotic stability of the closed-loop system]
\label{th:asymptotic}
Let $[u_0, v_0] \in \mathcal{H}$. Then, 
\begin{equation}
\label{eq:decay-trajectory}
\|\mathcal{S}_t[u_0, v_0]\|_\mathcal{H} \to 0 \quad \mbox{as}~ t \to + \infty. \end{equation} 
Together with the contraction property of $\{\mathcal{S}_t\}$, \eqref{eq:decay-trajectory} implies that $0$ is a globally asymptotically stable equilibrium point for the feedback system \eqref{eq:pde-bc}-\eqref{eq:feedback-law}.
\end{theo}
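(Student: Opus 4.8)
The plan is to prove global asymptotic stability via \emph{LaSalle's invariance principle}, for which the two essential ingredients have already been assembled: the semigroup $\{\mathcal{S}_t\}$ consists of contractions on $\mathcal{H}$ (Theorem \ref{th:had-wp}), and every trajectory is relatively compact in $\mathcal{H}$ (Proposition \ref{prop:compact}). First I would recall the standard consequence of these facts: for any $[u_0,v_0]\in\mathcal{H}$ the $\omega$-limit set $\omega([u_0,v_0])$ is nonempty, compact, invariant under $\{\mathcal{S}_t\}$, and the energy $\mathcal{E}$ (equal to $\tfrac12\|\cdot\|_\mathcal{H}^2$) is constant on it — this last point because $t\mapsto\mathcal{E}(\mathcal{S}_t[u_0,v_0])$ is nonincreasing (energy identity \eqref{eq:energy-identity}) hence converges to some limit $\ell\geq 0$, and by continuity $\mathcal{E}\equiv\ell$ on $\omega([u_0,v_0])$. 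It then suffices to show $\ell=0$, which follows if we show that the only complete trajectory along which $\mathcal{E}$ is constant is the zero trajectory.

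So the core of the argument is the following: let $[u,u']$ be a (generalized, in fact strong by invariance and the structure of $\mathcal{D}(\mathcal{A})$ — or one approximates) solution defined for all $t\in\mathbb{R}$, lying in a bounded invariant set, with $\mathcal{E}(u(t),u'(t))$ constant. Then the energy identity \eqref{eq:energy-identity} forces
\begin{equation}
\int_{\Gamma_0} g(D^*u')\,D^*u'\,\mathrm{d}\sigma = 0 \quad\text{for a.e. } t,
\end{equation}
and since $g$ is nondecreasing with $g(s)=0\iff s=0$ (Assumption \ref{as:g}), the integrand is nonnegative and vanishes only where $D^*u'=0$; hence $D^*u'(t)=0$ on $\Gamma_0$ for a.e. $t$. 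Recalling $D^*u'=-\partial_\nu[A^{-1}u']$ and setting $p\triangleq A^{-1}u'$ (the smoother variable \eqref{eq:cv-p}), one checks that $p$ solves a homogeneous wave equation $p''-\Delta p=0$ in $\Omega\times\mathbb{R}$ with $p_{|\Gamma}=0$ on all of $\Gamma$ \emph{and} $\partial_\nu p=0$ on $\Gamma_0$. By \emph{Holmgren's uniqueness theorem} (or a suitable unique continuation result for the wave operator; here one invokes that $\Gamma_0$ is a nonempty relatively open subset of the smooth boundary and that solutions have enough regularity, which the invariant-set/compactness argument provides), $p\equiv 0$, whence $u'\equiv 0$, and then from the wave equation $\Delta u=0$ with $u_{|\Gamma}=-g(D^*u')=-g(0)=0$, so $u\equiv 0$. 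Therefore $\omega([u_0,v_0])=\{0\}$, and relative compactness upgrades this to $\|\mathcal{S}_t[u_0,v_0]\|_\mathcal{H}\to 0$. The final sentence about global asymptotic stability is then immediate: attractivity is \eqref{eq:decay-trajectory}, and Lyapunov stability is the contraction property $\|\mathcal{S}_t[u_0,v_0]\|_\mathcal{H}\leq\|[u_0,v_0]\|_\mathcal{H}$.

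The main obstacle I anticipate is the unique continuation step, and making it rigorous at the low regularity of $\mathcal{H}=L^2(\Omega)\times H^{-1}(\Omega)$. Two technical points need care: (i) justifying that elements of the $\omega$-limit set give rise to trajectories regular enough to apply a unique continuation / Holmgren-type theorem — this should follow by first restricting to $[u_0,v_0]\in\mathcal{D}(\mathcal{A})$ (dense, and by Proposition \ref{prop:reg} or the Kato bound \eqref{eq:kato-bound} the trajectory stays in $\mathcal{D}(\mathcal{A})$ with the $p$-variable enjoying $H^1_0$-type regularity), then extending to all of $\mathcal{H}$ by density and the contraction estimate; (ii) correctly identifying the over-determined boundary conditions on the $p$-variable — noting that $p\in H^1_0(\Omega)$ gives the Dirichlet trace on all of $\Gamma$ for free, while the extra Neumann condition on $\Gamma_0$ comes precisely from the dissipation relation $D^*u'=0$ on $\Gamma_0$. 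I would also need to confirm that no geometric optics / observability condition on $\Omega$ is required here — only unique continuation, which holds for $C^\infty$ coefficients and any nonempty open piece of boundary by Holmgren's theorem applied on suitable time intervals, so no extra assumption on $\Omega$ beyond smoothness of $\Gamma$ is needed for this qualitative statement (in contrast to the quantitative decay rates of Section \ref{sec:pdr}).
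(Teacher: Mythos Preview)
Your proposal is correct and follows essentially the same route as the paper: reduction to initial data in $\mathcal{D}(\mathcal{A})$ by density and contraction, LaSalle's invariance principle using the precompactness of trajectories, constancy of the energy on the $\omega$-limit set forcing $\partial_\nu p = 0$ on $\Gamma_0$, and a unique continuation argument for the overdetermined wave problem satisfied by $p$ to conclude $p\equiv 0$. The paper invokes Robbiano's unique continuation theorem rather than Holmgren, and makes the inclusion $\omega([u_0,v_0])\subset\mathcal{D}(\mathcal{A})$ explicit via a Crandall--Liggett-type lemma, but these are refinements of exactly the steps you outline.
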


\begin{proof}
By the contraction property of the semigroup $\{ \mathcal{S}_t\}$ and  denseness of $\mathcal{D}(\mathcal{A})$ in $\mathcal{H}$, it suffices to prove \eqref{eq:decay-trajectory} for initial data $[u_0, v_0]$ in $\mathcal{D}(\mathcal{A})$.

To do so, we use a Lasalle-type invariance approach. Let us recall the classical line of arguments. We consider the $\omega$-limit set $\omega([u_0, v_0])$ of $[u_0, v_0]$, which can be characterized as follows: $[w_0, z_0] \in \mathcal{H}$ belongs to $\omega([u_0, v_0])$ if there exists an increasing sequence $\{t_n\} \in \mathbb{R}^\mathbb{N}$ such that $t_n \to + \infty$ and
\begin{equation}
\label{eq:carac-omega}
\mathcal{S}_{t_n}[u_0, v_0] \to [w_0, z_0] \quad \mbox{in $\mathcal{H}$ as}~ n \to + \infty.
\end{equation}
Recall that $\{\mathcal{S}_t [u_0, v_0]\}_{t \geq 0}$ is relatively compact in $\mathcal{H}$. Therefore, $\omega([u_0, v_0])$ is a nonempty (positively) invariant  compact set, and $\operatorname{dist}(\mathcal{S}_t [u_0, v_0], \omega([u_0, v_0])) \to 0$ as $t \to + \infty$ -- see \cite[Th\'eor\`eme 1.1.8]{haraux_systemes_1991}. Moreover, since $t \mapsto \|\mathcal{A}(\mathcal{S}_t[u_0, v_0])\|_\mathcal{H}$ is bounded, it follows from  \cite[Lemma 2.3]{crandall_semi-groups_1969} and \eqref{eq:carac-omega} that $\omega([u_0, v_0]) \subset \mathcal{D}(\mathcal{A})$. Besides,  since $\mathcal{E}(\mathcal{S}_t [u_0, v_0])$ is bounded and nonincreasing with respect to $t$, it must converge to some $\mathcal{E}_\infty \geq 0$ as $t$ goes to $+ \infty$.  By \eqref{eq:carac-omega} and continuity of $\mathcal{E}$, we have $\mathcal{E}(w_0, z_0) = \mathcal{E}_\infty$ for any $[w_0, z_0] \in \omega([u_0, v_0])$.

The remainder consists in proving that $\omega([u_0, v_0])$ is reduced to $\{0\}$.  Let $[w_0, z_0] \in \omega([u_0, v_0])$; we write $[w(t), w'(t)] = \mathcal{S}_t [w_0, z_0]$ and we notice that $\mathcal{E}(w(t), w'(t)) = \mathcal{E}_\infty$ for all $t \geq 0$. Furthermore, $[w, w']$ is a strong solution to \eqref{eq:pde-bc}-\eqref{eq:feedback-law} and we infer from the energy identity \eqref{eq:energy-identity} that
\begin{equation}
\label{eq:vanish-boundary}
\int_{0}^{\tau} \int_{\Gamma_0} g(- \partial_\nu [A^{-1}w']) \partial_\nu [A^{-1} w'] \, \mathrm{d} \sigma  \, \mathrm{d}t= 0 \quad \mbox{for all}~ \tau \geq 0.
\end{equation}
It follows from Assumption \ref{as:g} that $g(s)s > 0$ for every nonzero $s$.
Thus, letting $p \triangleq A^{-1}w'$, $p \in \mathcal{C}(\mathbb{R}^+, H^1_0(\Omega)) \cap L^\infty(0, +\infty; \mathcal{D}(A))$, \eqref{eq:vanish-boundary}  leads to
\begin{equation}
\label{eq:vanish-boundary-bis}
\partial_\nu p(\sigma, t) = 0 \quad \mbox{for a.e.}~ (\sigma, t) \in \Gamma_0 \times (0, + \infty).
\end{equation}
Next, we recall that $w' \in W^{1, \infty}(0, +\infty; H^{-1}(\Omega))$ and, using \eqref{eq:vanish-boundary-bis} together with the operator-theoretic formulation of \eqref{eq:pde-bc}-\eqref{eq:feedback-law}, we obtain $w'' + Aw = 0$. Hence, $p \in W^{1, \infty}(0, +\infty; H^1_0(\Omega))$ satisfies $
p' + w = 0
$, which in turn implies that $p \in W^{2, \infty}(0, +\infty; H^{-1}(\Omega))$ and solves $p'' + Ap = 0$ in $H^{-1}(\Omega)$, i.e., the standard variational formulation of the wave equation with homogeneous Dirichlet boundary conditions. In particular, $p \in \mathcal{C}^1(\mathbb{R}^+, L^2(\Omega))$ and solves the following boundary value problem:
\begin{subequations}
\begin{align}
&\partial_{tt} p - \Delta p = 0& &\mbox{in}~ \Omega \times (0, +\infty), \\
&p_{|\Gamma} = 0 & & \mbox{on}~ \Gamma \times (0, +\infty),\\
&\partial_\nu p = 0 & & \mbox{on}~ \Gamma_0 \times (0, +\infty).
\end{align}
\end{subequations}
The subset $\Gamma_0$ being relatively open in $\Gamma$, a unique continuation argument for waves yields $p = 0$  -- for instance, one can directly apply \cite[Th\'eor\`eme 2]{robbiano_theoreme_1991}. Therefore, $w' = 0$, $Aw = 0$ and finally $w = 0$, which concludes the proof.
\end{proof}

\section{Polynomial decay rates for strong solutions}
\label{sec:pdr}

This section is dedicated to the analysis of the decay rate of strong solutions under additional assumptions on the feedback nonlinearity and the geometry of the problem.
\subsection{Statement of the result and outline of the proof}
\label{subsec:pdr}

In what follows, we work under stronger assumptions that are given next.
\begin{as}
\label{as:nonlinearity}
There exist positive constants $S$, $\alpha_1$ and $\alpha_2$ such that
\begin{equation}
\label{eq:nonlinearity}
\alpha_1|s| \leq |g(s)| \leq \alpha_2 |s| \quad \mbox{for all}~ |s|\leq S.
\end{equation}
\end{as}
\begin{as}
\label{as:geometry} The domain $\Omega \subset \mathbb{R}^d$ with smooth boundary $\Gamma = \Gamma_0 \cup \Gamma_1$ satisfies the following conditions:
\begin{enumerate}
\item The boundary is such that
\begin{equation}
\overline{\Gamma_0} \cap \overline{\Gamma_1} = \emptyset;
\end{equation}
\item
There exists a point $x_0 \in \mathbb{R}^d$ such that, setting $h(x) \triangleq x - x_0$,
\begin{equation}
\label{eq:geometry-h}
h \cdot \nu \leq 0 \quad \mbox{on}~ \Gamma_1.
\end{equation}
\end{enumerate}
\end{as}

Then, we can estimate the decay rate of each strong solution.
\begin{theo}[Non-uniform polynomial decay rate]
\label{th:pdr}
Let $r \geq \max \{d-1, 2\}$.  Under Assumptions  \ref{as:nonlinearity} and \ref{as:geometry}, strong solutions $[u, u']$ to \eqref{eq:pde-bc}-\eqref{eq:feedback-law} satisfy 
\begin{equation}
\label{eq:result-pdr}
\mathcal{E}(u(t), u'(t)) \leq  C_u  t^{-\frac{2}{r-1}} \quad \mbox{for all}~ t \geq 0,
\end{equation}
where $C_u$ is a positive constant depending only on  $\mathcal{E}(u_0, v_0)$ and $\|\mathcal{A}[u_0, v_0]\|_\mathcal{H}$. 

%
\end{theo}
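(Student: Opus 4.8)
## Proof plan

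The plan is to transfer the problem to the smoother variable $p \triangleq A^{-1}u'$ and run a multiplier argument on the wave-type equation that $p$ satisfies, exactly as in the energy method for interior/boundary stabilization, but keeping careful track of the energy level and of the weak norm of the boundary trace. First I would record the equation satisfied by $p$: as in the proof of Theorem \ref{th:asymptotic}, a strong solution gives $p \in W^{2,\infty}(0,+\infty; H^{-1}(\Omega)) \cap W^{1,\infty}(0,+\infty; H^1_0(\Omega)) \cap L^\infty(0,+\infty;\mathcal{D}(A))$ with $p'' - \Delta p = 0$ in $\Omega$, $p_{|\Gamma} = 0$ on $\Gamma_1$, and $p_{|\Gamma_0} = -g(D^*u') = -g(-\partial_\nu p)$, i.e. a wave equation with a nonlinear Dirichlet condition on $\Gamma_0$. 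The natural energy here is $E_p(t) \triangleq \tfrac12\{\|p'(t)\|_{L^2(\Omega)}^2 + \|\nabla p(t)\|_{L^2(\Omega)^d}^2\}$, and since $p' = -u'$... wait, $p' = A^{-1}u'' = A^{-1}(\Delta u) \in H^1_0$; in fact from $p' + w = 0$ in the earlier notation one gets $\|\nabla p'\|$ controls things — more precisely one checks $E_p(t)$ is comparable to $\mathcal{E}(u(t),u'(t))$ plus a term controlled by $\|\mathcal{A}[u_0,v_0]\|_{\mathcal H}$, using the Kato bound \eqref{eq:kato-bound} and the regularity estimate \eqref{eq:bound-H1-L2}. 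I would make this comparison precise first, so that decay of $E_p$ yields decay of $\mathcal{E}$.

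Next I would derive the basic multiplier identity. Using $h(x) = x - x_0$ from Assumption \ref{as:geometry}, multiply $p'' - \Delta p = 0$ by $h\cdot\nabla p + \tfrac{d-1}{2}p$ (the Rellich–Morawetz multiplier) and integrate over $\Omega \times (S_1, S_2)$. The interior terms collapse to $E_p$; on $\Gamma_1$ the Dirichlet condition $p = 0$ makes $\nabla p$ normal and the boundary term carries the sign $h\cdot\nu \le 0$, hence has the good sign and can be discarded; on $\Gamma_0$ one is left with boundary integrals involving $\partial_\nu p$, $\partial_t p$, and $p_{|\Gamma_0}$, the tangential gradient, etc. The outcome is an inequality of the form
\begin{equation}
\label{eq:plan-obs}
\int_{S_1}^{S_2} E_p(t)\,\mathrm{d}t \leq C\,E_p(S_1) + C\int_{S_1}^{S_2}\!\!\int_{\Gamma_0}\Big(|\partial_\nu p|^2 + |\partial_t p_{|\Gamma_0}|^2 + |\nabla_\tau p_{|\Gamma_0}|^2 + |p_{|\Gamma_0}|^2\Big)\,\mathrm{d}\sigma\,\mathrm{d}t,
\end{equation}
where $\nabla_\tau$ is the tangential gradient; the terms on $\Gamma_0$ supported away from $\Gamma_1$ (Assumption \ref{as:geometry}(1)) mean no cutoff-induced lower-order difficulties arise, though a compactness–uniqueness step may still be needed to absorb genuinely lower-order terms — and here the unique continuation already invoked in Theorem \ref{th:asymptotic} does the job. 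The key point is then to bound the $\Gamma_0$-integral by the dissipation. The dissipation from \eqref{eq:energy-identity} controls $\int\!\!\int_{\Gamma_0} g(-\partial_\nu p)(-\partial_\nu p)$, and on the set $\{|\partial_\nu p|\le S\}$ Assumption \ref{as:nonlinearity} gives $|\partial_\nu p|^2 \lesssim g(-\partial_\nu p)(-\partial_\nu p)$, while on $\{|\partial_\nu p|>S\}$ one uses the a priori $L^\infty$-in-time $\mathcal{D}(A)$-bound on $p$ (hence $L^\infty$ bound on the trace $\partial_\nu p$ via \eqref{eq:bound-H1-L2} and trace theory) to interpolate: $|\partial_\nu p|^2 \le \|\partial_\nu p\|_{L^\infty}^{2-q}|\partial_\nu p|^q \lesssim (\text{const})\cdot (g(-\partial_\nu p)(-\partial_\nu p))^{q/2}$ for a suitable $q<2$ — this is where the exponent $r$ and the threshold $r \ge \max\{d-1,2\}$ enter, the $d-1$ coming from the trace/Sobolev exponent needed to absorb $\partial_t p_{|\Gamma_0}$ and $\nabla_\tau p_{|\Gamma_0}$ in terms of $\partial_\nu p$ on $\Gamma_0$ using the equation restricted to the boundary and interior regularity.

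Combining, one gets, after using the boundedness of $E_p$ (by $\mathcal{E}(u_0,v_0)$ and $\|\mathcal{A}[u_0,v_0]\|_{\mathcal H}$) to convert the $L^{q/2}$-in-time bound into the form needed, a nonlinear integral inequality
\begin{equation}
\int_{S_1}^{S_2} E_p(t)\,\mathrm{d}t \leq C\,E_p(S_1) + C\Big(\int_{S_1}^{S_2}\!\!\int_{\Gamma_0} g(-\partial_\nu p)(-\partial_\nu p)\,\mathrm{d}\sigma\,\mathrm{d}t\Big)^{\theta} \leq C\,E_p(S_1) + C\big(E_p(S_1) - E_p(S_2)\big)^{\theta}
\end{equation}
with $\theta = \tfrac{2}{r-1} \cdot \tfrac12 \cdot (\dots)$ — the precise $\theta$ being pinned down so that the resulting recursion gives the stated rate. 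Finally I would invoke a standard nonlinear Gronwall/interpolation lemma for decreasing energies satisfying $\int_S^\infty E \le C E(S) + C(E(S))^{1+\epsilon}$-type inequalities (of the Lasiecka–Tataru or Komornik type) to conclude $E_p(t) \lesssim t^{-2/(r-1)}$, and transfer back to $\mathcal{E}$. The main obstacle I anticipate is precisely the treatment of the nonlinear boundary term on $\Gamma_0$ in the "large velocity" regime $|\partial_\nu p| > S$: one only has linear growth of $g$ near zero, so one must genuinely use the uniform-in-time higher regularity of strong solutions (the Kato bound together with Proposition \ref{prop:reg}) to get an $L^\infty$ bound on the boundary trace of $\partial_\nu p$ and interpolate — and one must verify that the tangential derivative and time derivative of the Dirichlet trace $p_{|\Gamma_0} = -g(-\partial_\nu p)$, which appear in the Rellich multiplier boundary terms, can be controlled (here the Lipschitz-on-$H^{1/2}$ property from Proposition \ref{prop:reg}, i.e. \eqref{eq:str-cont-g}, and an additional interior elliptic estimate for $\partial_\nu p$ on $\Gamma_0$ are the tools), which is the source of the $d-1$ in the hypothesis on $r$.
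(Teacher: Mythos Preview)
Your overall plan (pass to $p = A^{-1}u'$, run a Rellich--Morawetz multiplier, split the boundary term according to $|\partial_\nu p| \lessgtr S$, use Sobolev embeddings tied to $r \ge d-1$, and finish with a Komornik-type integral lemma) matches the paper's strategy in spirit. However, there is a genuine error in the PDE you write for $p$, and it would derail your computation.

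You claim $p'' - \Delta p = 0$ with $p_{|\Gamma_1} = 0$ and $p_{|\Gamma_0} = -g(-\partial_\nu p)$. This is not correct: since $u' \in L^2(\Omega)$ for strong solutions, $p = A^{-1}u' \in \mathcal{D}(A) \subset H^1_0(\Omega)$, so $p_{|\Gamma} = 0$ on \emph{all} of $\Gamma$. The nonlinearity does not appear as a boundary condition for $p$; it appears as an interior source term. Writing $\Phi \triangleq DPg(D^*u')$, the identity $u = -[p' + \Phi]$ (obtained by applying $A^{-1}$ to the second equation of the system) gives, after one more time derivative, $p'' - \Delta p = -\Phi'$ with homogeneous Dirichlet data. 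Consequently the multiplier boundary terms you anticipate---tangential gradients and time derivatives of a nonzero trace $p_{|\Gamma_0}$---simply do not occur (the tangential part of $\nabla p$ vanishes on $\Gamma$). What does occur instead is an interior term $\int_\Omega \Phi' \mathcal{M}p$, or equivalently, after integration by parts in time, $\int_\Omega \Phi \mathcal{M}p' = -\int_\Omega \Phi \mathcal{M}(u + \Phi)$. The genuinely hard piece is then $\int_\Omega \Phi\,[2h\cdot\nabla u]\,\mathrm{d}x$, which the paper handles by writing it as $(g(D^*u'), D^*[2h\cdot\nabla u])_{L^2(\Gamma_0)}$ and estimating $D^*[2h\cdot\nabla u]$ in $L^2(\Gamma)$ via $D^* \in \mathcal{L}(H^{-1/2}(\Omega), L^2(\Gamma))$ and interpolation between $\|2h\cdot\nabla u\|_{L^2}$ (bounded by $\|\mathcal{A}[u_0,v_0]\|_{\mathcal H}$) and $\|2h\cdot\nabla u\|_{H^{-1}} \lesssim \|u\|_{L^2}$.

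Two further remarks. First, your proposed interpolation $|\partial_\nu p|^2 \le \|\partial_\nu p\|_{L^\infty}^{2-q}|\partial_\nu p|^q$ on $\{|\partial_\nu p|>S\}$ presumes $\partial_\nu p \in L^\infty(\Gamma_0)$, but one only has $\partial_\nu p = -D^*u' \in H^{1/2}(\Gamma)$ uniformly in time, and $H^{1/2}(\Gamma) \not\hookrightarrow L^\infty(\Gamma)$ for $d \ge 3$. The paper instead inserts $|g(D^*u')|^{-\eta}|g(D^*u')D^*u'|^\eta$, applies H\"older with exponents $1/\eta$, $1/(1-\eta)$, and uses $H^{1/2}(\Gamma_0) \hookrightarrow L^{(2-\eta)/(1-\eta)}(\Gamma_0)$ with $\eta = 2/(r+1)$; this is exactly where $r \ge d-1$ enters. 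Second, the paper avoids any compactness--uniqueness step by multiplying not by $\mathcal{M}p$ but by the \emph{weighted} multiplier $\mathcal{E}_u^{(r-1)/2}(t)\,\mathcal{M}p$, which directly produces an inequality of the form $\int_\tau^\infty \mathcal{E}_u^{(r+1)/2} \le K_u \mathcal{E}_u(\tau)$, to which Lemma~\ref{lem:int-ineq} applies immediately.
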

Theorem \ref{th:pdr} is a Dirichlet counterpart to non-uniform polynomial decay results that are well-known in the case of Neumann boundary conditions -- see, e.g., \cite[Theorem 9.10]{komornik_exact_1994}. Generally speaking, by \emph{non-uniform} we mean that the right-hand side of the inequality \eqref{eq:result-pdr} depends not only on the natural energy of the initial data $[u_0, v_0]$ (i.e., $\mathcal{E}(u_0, v_0)$) but also on higher-order terms (here, $\|\mathcal{A}[u_0, v_0]\|_\mathcal{H}$).
 As mentioned in the introduction, in the context of the wave equation with nonlinear boundary control, this type of decay is expected when the feedback nonlinearity $g$ has no linear lower bound at infinity (for instance, when $g$ is bounded and \eqref{eq:nonlinearity} cannot hold for all real numbers). Note that Theorem \ref{th:pdr} is different from \emph{uniform} polynomial decay results such as \cite[Theorem 5.1]{rao1993decay} or \cite[Theorem 2.1]{zuazua_uniform_1990}, where the corresponding feedback nonlinearities may have power growth around zero (typically leading to polynomial instead of exponential decay rate) but are still required to grow linearly at infinity (allowing uniform decay). In our problem, the non-uniform polynomial decay rate of Theorem \ref{th:pdr} is related to the lack of linear dissipation at infinity --  this will be further discussed in Section \ref{sec:conc}.

Let us introduce the following notation: if $[u, u']$ is a given solution to \eqref{eq:pde-bc}-\eqref{eq:feedback-law}, we define a (continuous) function $\mathcal{E}_u$ over $\mathbb{R}^+$ by
\begin{equation}
\mathcal{E}_u(t) \triangleq \mathcal{E}(u(t), u'(t)).
\end{equation}
Here, polynomial decay rate is obtained by applying the following classical lemma to the (nonincreasing) energy $\mathcal{E}_u$ of each solution -- see \cite[Theorem 9.1]{komornik_exact_1994} for a proof.
\begin{lemma}
\label{lem:int-ineq}
Let $E : \mathbb{R}^+ \to \mathbb{R}^+$ be a {nonincreasing} function. Assume that there exist two positive constants $\gamma$ and $T$ such that
\begin{equation}
\int_\tau^{+\infty} E^{\gamma + 1}(t) \, \mathrm{d}t \leq T E(0)^{\gamma} E(\tau) \quad \mbox{for all}~ \tau \geq 0.
\end{equation}
Then,
\begin{equation}
E(t) \leq E(0) \left ( \frac{T + \gamma t}{T + \gamma T} \right )^{- 1/ \gamma} \quad \mbox{for all}~ t \geq T.
\end{equation}
\end{lemma}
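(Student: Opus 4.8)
The plan is to treat Lemma \ref{lem:int-ineq} as a self-contained real-analysis statement and to prove it by comparison with a scalar ODE, followed by a conversion step that exploits the monotonicity of $E$. First I would dispose of the degenerate cases: if $E(0)=0$ then $E\equiv 0$ and there is nothing to prove, while if $E$ reaches $0$ at some finite time it stays $0$ afterwards, so the asserted inequality is trivial there. Hence I may assume $E>0$ on all of $\mathbb{R}^+$.

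Next I would introduce the ``remainder'' function $\phi(\tau):=\int_\tau^{+\infty}E^{\gamma+1}(t)\,\mathrm{d}t$. It is finite by the hypothesis taken at $\tau=0$, strictly positive, nonincreasing, locally absolutely continuous, and satisfies $\phi'=-E^{\gamma+1}$ a.e. Rewriting the hypothesis as $E(\tau)\geq \phi(\tau)/(TE(0)^\gamma)$ and raising to the power $\gamma+1$ (legitimate since both sides are nonnegative) yields, a.e.,
\[
-\phi'(\tau)=E(\tau)^{\gamma+1}\geq \bigl(TE(0)^{\gamma}\bigr)^{-(\gamma+1)}\phi(\tau)^{\gamma+1}.
\]
Since $\phi>0$, the function $\phi^{-\gamma}$ is again locally absolutely continuous, and the previous inequality gives $(\phi^{-\gamma})'=-\gamma\phi^{-\gamma-1}\phi'\geq \gamma\bigl(TE(0)^{\gamma}\bigr)^{-(\gamma+1)}$ a.e. Integrating from $0$ to $\tau$ and bounding $\phi(0)\leq TE(0)^{\gamma+1}$ (hypothesis at $\tau=0$) gives, after simplification,
\[
\phi(\tau)\leq T^{(\gamma+1)/\gamma}\,E(0)^{\gamma+1}\,(T+\gamma\tau)^{-1/\gamma}\qquad\text{for all }\tau\geq 0.
\]

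To turn this into a pointwise bound on $E$, I would use monotonicity: for any $0\leq S<t$,
\[
\phi(S)=\int_S^{+\infty}E^{\gamma+1}\,\mathrm{d}t\;\geq\;\int_S^{t}E^{\gamma+1}\,\mathrm{d}t\;\geq\;(t-S)\,E(t)^{\gamma+1},
\]
hence $E(t)^{\gamma+1}\leq T^{(\gamma+1)/\gamma}E(0)^{\gamma+1}\bigl[(t-S)\,(T+\gamma S)^{1/\gamma}\bigr]^{-1}$. Here a fixed window $t-S\equiv T$ is \emph{not} good enough; the right move is to optimize over $S$. A short calculus computation shows that, for $t\geq T$, the product $(t-S)(T+\gamma S)^{1/\gamma}$ is maximized at $S=\frac{t-T}{\gamma+1}\in[0,t)$, where both factors equal $\frac{T+\gamma t}{\gamma+1}$, so the maximal value is $\bigl(\frac{T+\gamma t}{\gamma+1}\bigr)^{(\gamma+1)/\gamma}$. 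Substituting and taking the $(\gamma+1)$-th root produces exactly
\[
E(t)\leq E(0)\Bigl(\frac{T+\gamma T}{T+\gamma t}\Bigr)^{1/\gamma}=E(0)\Bigl(\frac{T+\gamma t}{T+\gamma T}\Bigr)^{-1/\gamma},\qquad t\geq T.
\]

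I do not expect a serious obstacle: this is essentially the classical argument behind such integral inequalities (and matches the statement sharply). The only points deserving a little care are the degenerate cases discussed above and the measure-theoretic justification of the ODE step — namely that $\phi$, and hence $\phi^{-\gamma}$, is locally absolutely continuous with the stated a.e.\ derivative inequality, so that the integration over $[0,\tau]$ is valid. The one genuinely non-mechanical idea is the choice of the growing window $S=\frac{t-T}{\gamma+1}$ in the conversion step; with a fixed window one only recovers the weaker rate $t^{-1/(\gamma(\gamma+1))}$.
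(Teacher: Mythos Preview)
Your proof is correct and is precisely the classical argument the paper is pointing to: the paper does not prove this lemma itself but cites \cite[Theorem 9.1]{komornik_exact_1994}, and your proposal reproduces that proof faithfully, including the key optimization $S=(t-T)/(\gamma+1)$ in the conversion step.
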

We already know from Section \ref{sec:wp-as} that $\mathcal{E}_u(t)$ converges to $0$ as $t$ goes to $+ \infty$. Our subsequent efforts focus on estimating
\begin{equation}
\label{eq:int-pow}
\int_{\tau_1}^{\tau_2} \mathcal{E}_u^{(r + 1)/2}(t) \, \mathrm{d}t \quad \mbox{for arbitrary}~ 0 \leq \tau_1 \leq \tau_2,
\end{equation}
where we recall that
\begin{equation}
\mathcal{E}_u(t) = \frac{1}{2} \{ \|u\|^2_{L^2(\Omega)} + \|u'\|^2_{H^{-1}(\Omega)} \}.
\end{equation}
As mentioned in the introduction, the proof is based on an analysis of the variable $p$ defined by

\begin{equation}
p = A^{-1}u'
\end{equation}
which solves, at least formally, the following boundary-value problem:
\begin{subequations}
\begin{align}
\label{eq:wave-p}
&\partial_{tt}p - \Delta p = - \partial_t [DPg(- \partial_\nu p)] & &\mbox{in}~ \Omega \times (0, + \infty), \\
& p_{|\Gamma} = 0 && \mbox{on}~ \Gamma \times (0, + \infty).
\end{align}
\end{subequations}
If $u'$ takes values in $L^2(\Omega)$, recalling the formula \eqref{eq:id-normal-derivative}, we have
\begin{equation}
- \partial_\nu p = - \partial_\nu [A^{-1}u'] = D^*Ap = D^*u'.
\end{equation}
To alleviate notation, in the sequel we denote by $\Phi$ the term
\begin{equation}
\Phi(t) \triangleq  DPg(D^* u'(t)).
\end{equation}
In regards to \eqref{eq:func-trace}, we see that $- \Phi$ is the harmonic extension of the trace $u_{|\Gamma}$. 
As mentioned earlier, the $p$-variable is smoother, which permits, in regards to the wave-type equation \eqref{eq:wave-p} satisfied by $p$, the use of a differential multiplier technique to obtain estimates of the integral over time of
\begin{equation}
\label{eq:energy-p}
\frac{1}{2}\int_\Omega \|\nabla p \|^2 + |p'|^2 \, \mathrm{d}x
\end{equation}
premultiplied by an appropriate power of $\mathcal{E}_u$. The quantity \eqref{eq:energy-p} is the natural energy of $[p, p']$ at the $H^1(\Omega)\times L^2(\Omega)$-level (i.e., the standard variational framework); from there, we will be able to deduce a suitable integral estimate of the energy $\mathcal{E}_u$ associated with the less regular $u$-variable.


\begin{rem}
In fact, since we want to avoid differentiating terms involving $g$ so that our results remain valid when the nonlinearity is only continuous, we will rather multiply an integrated version of \eqref{eq:wave-p}, namely the formula $u = - [p' + \Phi]$, by the time derivative of the multiplier -- see  Lemma \ref{lem:properties-p} below. In particular, $p'$ need not be continuous.
\end{rem}

\subsection{The multiplier identity}

In this subsection, we give more precise properties of the $p$-variable and  derive an expression of \eqref{eq:int-pow} in the form of a identity obtained by applying an appropriate multiplier  to \eqref{eq:wave-p}. We recall that $\mathcal{D}(A)$ is  $H^2(\Omega) \cap H^1_0(\Omega)$ equipped with the norm $\|A\cdot\|_{L^2(\Omega)}$, which is equivalent to the one induced by $H^2(\Omega)$. 
\begin{lemma}
\label{lem:properties-p}
Let $[u, u']$ be a strong solution.
The corresponding functions $p$ and $\Phi$ enjoy the regularity
\begin{equation}
p \in L^\infty(0, +\infty; \mathcal{D}(A)) \cap W^{1, \infty}(0, +\infty; H^1_0(\Omega)),  \quad \Phi \in L^\infty(0, +\infty; H^1(\Omega)),
\end{equation}
Also, the following identity holds:
\begin{equation}
\label{eq:u-p'-Phi-identity}
u = - [p' + \Phi] \in \mathcal{C}(\mathbb{R}, L^2(\Omega)).
\end{equation}
\end{lemma}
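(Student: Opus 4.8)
The plan is to unwind the definitions and bootstrap regularity starting from the fact that $[u,u']$ is a strong solution, hence $[u(t),u'(t)] \in \mathcal{D}(\mathcal{A})$ for all $t$ with the Kato bound \eqref{eq:kato-bound} controlling $\|\mathcal{A}[u(t),u'(t)]\|_{\mathcal H}$ uniformly in $t$. Recall that for a strong solution, $[u,u'] \in W^{1,\infty}(0,+\infty;\mathcal{H})$, so in particular $u \in W^{1,\infty}(0,+\infty;L^2(\Omega))$ and $u' \in W^{1,\infty}(0,+\infty;H^{-1}(\Omega))$ with $u'' + Au + ADPg(D^*u') = 0$ a.e.\ in $\mathcal{D}(A^{-1})$. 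Since $p = A^{-1}u'$ and $A^{-1} \in \mathcal{L}(H^{-1}(\Omega),H^1_0(\Omega))$, we immediately get $p \in W^{1,\infty}(0,+\infty;H^1_0(\Omega))$, and applying $A^{-1}$ to the abstract equation gives $p' = -A^{-1}u'' = u + DPg(D^*u') = u + \Phi$, i.e.\ the identity \eqref{eq:u-p'-Phi-identity} (note $p'$ lives in $W^{1,\infty}$ hence has a continuous representative, and $u \in \mathcal{C}(\mathbb{R},L^2(\Omega))$ by the semigroup, while $\Phi$ will be shown to lie in $L^\infty(0,+\infty;H^1(\Omega))$; the sum $p'+\Phi = -u$ is continuous into $L^2(\Omega)$).

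For the $\mathcal{D}(A)$-regularity of $p$: from $[u(t),u'(t)] \in \mathcal{D}(\mathcal{A})$ we have $u + DPg(D^*u') \in H^1_0(\Omega)$, and $\mathcal{A}[u,u'] = [-u', Au + ADPg(D^*u')]$ so that $A[u + DPg(D^*u')] \in H^{-1}(\Omega)$ with norm bounded by $\|\mathcal{A}[u(t),u'(t)]\|_{\mathcal H} \le \|\mathcal{A}[u_0,v_0]\|_{\mathcal H}$. Since $u + DPg(D^*u') \in H^1_0(\Omega)$ and $A(u+DPg(D^*u')) \in H^{-1}(\Omega)$ does not by itself give $H^2$, I instead use $p = A^{-1}u'$ directly: I claim $u'(t) \in L^2(\Omega)$. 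Indeed $[u,u'] \in \mathcal{D}(\mathcal{A})$ forces $v = u' \in L^2(\Omega)$ by the very definition of $\mathcal{D}(\mathcal{A})$, with $\|u'(t)\|_{L^2(\Omega)}$ controlled via the estimate \eqref{eq:stronger-est-v}-type bound — more precisely, applying the resolvent estimates from the proof of Proposition \ref{prop:compact} (or directly: $u'(t)$ is the second coordinate of $(\mathcal A+\lambda)^{-1}$ applied to something bounded) gives $\|u'(t)\|_{L^2(\Omega)} \le C\|\mathcal{A}[u_0,v_0]\|_{\mathcal H}$ uniformly in $t$. Then $p(t) = A^{-1}u'(t)$ with $u'(t) \in L^2(\Omega)$ gives $p(t) \in \mathcal{D}(A)$ and $\|p(t)\|_{\mathcal{D}(A)} = \|Ap(t)\|_{L^2(\Omega)} = \|u'(t)\|_{L^2(\Omega)}$, hence $p \in L^\infty(0,+\infty;\mathcal{D}(A))$. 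For $\Phi$: since $u'(t) \in L^2(\Omega)$ we have $D^*u'(t) \in H^{1/2}(\Gamma)$ by \eqref{eq:properties-D}, then $Pg(D^*u'(t)) \in H^{1/2}(\Gamma)$ using Assumption \ref{as:geometry}(1) and the estimates \eqref{eq:str-cont-g}, $P \in \mathcal{L}(H^{1/2}(\Gamma))$ established in Proposition \ref{prop:reg}, and finally $DPg(D^*u'(t)) \in H^1(\Omega)$ by \eqref{eq:properties-D}, all with uniform-in-$t$ bounds; thus $\Phi \in L^\infty(0,+\infty;H^1(\Omega))$.

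The main obstacle is making the regularity of $u'(t)$ genuinely uniform in $t$ and correctly chaining it through the trace and Dirichlet maps: one must be careful that the bound $\|u'(t)\|_{L^2(\Omega)} \le C\|\mathcal{A}[u_0,v_0]\|_{\mathcal H}$ is not automatic from $[u(t),u'(t)]\in\mathcal{D}(\mathcal{A})$ alone but follows from combining the Kato inequality \eqref{eq:kato-bound} with the elliptic estimates already derived in the proofs of Propositions \ref{prop:compact} and \ref{prop:reg} — essentially \eqref{eq:stronger-est-v} and \eqref{eq:est-H1} applied pointwise in $t$ to the resolvent representation of $[u(t),u'(t)]$. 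Once that uniform $L^2$-bound on $u'(t)$ is in hand, everything else — the $\mathcal{D}(A)$-regularity of $p$, the $W^{1,\infty}$ statements, the $H^1(\Omega)$-regularity of $\Phi$, and the identity $u = -(p'+\Phi)$ together with its continuity into $L^2(\Omega)$ — follows by linear bounded maps and the already-established estimates, with no further analytic input.
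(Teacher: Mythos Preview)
Your approach is correct and essentially the same as the paper's. Two minor points worth cleaning up:

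\textbf{Sign slip.} You write ``$p' = -A^{-1}u'' = u + \Phi$'', but $p = A^{-1}u'$ gives $p' = A^{-1}u''$, not $-A^{-1}u''$. From $u'' = -Au - A\Phi$ one gets $p' = -u - \Phi$, hence $u = -(p'+\Phi)$, which is the identity you want. This is just a typo, not a gap.

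\textbf{The uniform $L^2$ bound on $u'$ is immediate.} You flag as the ``main obstacle'' the uniform-in-$t$ estimate $\|u'(t)\|_{L^2(\Omega)} \le C\|\mathcal{A}[u_0,v_0]\|_{\mathcal H}$ and propose to extract it from resolvent estimates. In fact it is trivial: by definition $\mathcal{A}[u,v] = [-v, \ldots]$, so $\|v\|_{L^2(\Omega)} \le \|\mathcal{A}[u,v]\|_{\mathcal H}$ directly, and Kato's inequality \eqref{eq:kato-bound} finishes it. The paper simply invokes Proposition~\ref{prop:reg} (which packages this together with the $H^1$ bound on $u$) and then reads off $p = A^{-1}u' \in L^\infty(0,+\infty;\mathcal{D}(A))$ and $p' = A^{-1}u'' \in L^\infty(0,+\infty;H^1_0(\Omega))$. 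For $\Phi$, the paper takes the shortcut $\Phi = -u - p'$ with $u \in H^1(\Omega)$ and $p' \in H^1_0(\Omega)$, rather than your direct route through $D^*$, $g$, $P$, $D$; both are fine.
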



\begin{proof}
We infer from Proposition \ref{prop:reg} that
$
[u, u'] \in L^{\infty}(0, + \infty; H^1(\Omega) \times L^2(\Omega)).
$
As a consequence, since $A^{-1}$ is continuous from $L^2(\Omega)$ into $\mathcal{D}(A)$, we get
\begin{equation}
p = A^{-1} u' \in L^\infty(0, +\infty; \mathcal{D}(A)).
\end{equation}
Besides,  $u' : \mathbb{R}^+ \to H^{-1}(\Omega)$ is absolutely continuous and
\begin{equation}
\label{eq:strong-diff-H-1}
u'' + Au = -ADPg(D^*u') \quad \mbox{a.e.}
\end{equation}
Thus, applying $A^{-1}$ to \eqref{eq:strong-diff-H-1} yields
\begin{equation}
p' = A^{-1}u'' \in L^\infty(0, +\infty; H^1_0(\Omega)),
\end{equation}
and also
\begin{equation}
u = - [p' + \Phi], \quad \mbox{and}~ \Phi \in L^\infty(0, +\infty; H^1(\Omega)),
\end{equation}
which concludes the proof.
\end{proof}

Define the usual wave multiplier as follows:
\begin{equation}
\mathcal{M}p \triangleq 2h \cdot \nabla p + (d-1) p,
\end{equation}
where $h(x) = x - x_0$ as defined in Assumption \ref{as:geometry} and $d$ is the space dimension. Since $p$ satisfies a wave equation, we know that
$
\int_{\tau_1}^{\tau_2} \int_\Omega \| \nabla p \|^2 + |p'| \, \mathrm{d}x \, \mathrm{d}t
$
can be estimated by multiplying \eqref{eq:wave-p} by $\mathcal{M}p$ and integrating over $\Omega \times (\tau_1, \tau_2)$.
 Since we are looking for estimates of $\mathcal{E}_u$ at the power $(r +1)/2$, we premultiply $\mathcal{M}p$ by $\mathcal{E}_u$ at the power $(r - 1)/2$.
Thus, we shall multiply \eqref{eq:wave-p} by
\begin{equation}
\mathcal{E}_u^{(r-1)/2}(t) \mathcal{M}p(x, t).
\end{equation}
 The resulting identity is given in the next lemma.

\begin{lemma}[Multiplier identity] The following equality holds for any $0 \leq \tau_1 \leq \tau_2$:
\begin{equation}
\label{eq:multiplier-identity}
\begin{aligned}
2 \int_{\tau_1}^{\tau_2} \mathcal{E}_u^{(r+1)/2}  \, \mathrm{d}t =  \mathcal{E}_u^{(r-1)/2} \int_\Omega u \mathcal{M}p \, \mathrm{d}x \bigg |_{\tau_1}^{\tau_2} +  \int_{\tau_1}^{\tau_2} \mathcal{E}_u^{(r-1)/2} \int_\Gamma (h \cdot \nu) |\partial_\nu p|^2 \, \mathrm{d}\sigma  - \int_{\Gamma_0} (h \cdot \nu) |g(D^*u')|^2 \, \mathrm{d} \sigma \, \mathrm{d}t\\ 
- \int_{\tau_1}^{\tau_2} \mathcal{E}_u^{(r-1)/2}\int_\Omega  (d+1) \Phi u +\Phi [2h \cdot  \nabla u] \, \mathrm{d}x \, \mathrm{d}t
- \frac{(r-1)}{2}  \int_{\tau_1}^{\tau_2} \mathcal{E}_u'\mathcal{E}_u^{(r-3)/2} \int_\Omega u \mathcal{M}p \, \mathrm{d}x \, \mathrm{d}t.
\end{aligned}
\end{equation}
\end{lemma}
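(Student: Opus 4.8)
The plan is to run the classical spatial (Rellich-type) multiplier argument for the wave-type equation \eqref{eq:wave-p} satisfied by $p$, but in the integrated form suggested by the preceding remark, so that $g$ is never differentiated. Concretely, instead of testing \eqref{eq:wave-p} with $\mathcal{E}_u^{(r-1)/2}\mathcal{M}p$, I would differentiate in time the scalar function $t \mapsto \int_\Omega u\,\mathcal{M}p\,\mathrm{d}x$, exploit the two relations $u' = Ap = -\Delta p$ and $u = -[p'+\Phi]$ furnished by Lemma~\ref{lem:properties-p}, and only then premultiply by $\mathcal{E}_u^{(r-1)/2}$ and integrate over $(\tau_1,\tau_2)$, handling the resulting total time-derivative by an integration by parts.

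First I would collect the regularity that makes the manipulations legitimate. By Proposition~\ref{prop:reg}, $u \in L^\infty(0,+\infty;H^1(\Omega))$ and $u' \in L^\infty(0,+\infty;L^2(\Omega))$, so $u \in W^{1,\infty}(0,+\infty;L^2(\Omega))$; together with $p \in W^{1,\infty}(0,+\infty;H^1_0(\Omega)) \cap L^\infty(0,+\infty;\mathcal{D}(A))$ and $\Phi \in L^\infty(0,+\infty;H^1(\Omega))$ from Lemma~\ref{lem:properties-p}, this shows that $\mathcal{M}p$ lies in $W^{1,\infty}(0,+\infty;L^2(\Omega))$ with $(\mathcal{M}p)' = \mathcal{M}(p')$ (since $h$ is time-independent), hence $t\mapsto\int_\Omega u\,\mathcal{M}p\,\mathrm{d}x$ is Lipschitz with a.e.\ derivative $\int_\Omega u'\,\mathcal{M}p\,\mathrm{d}x + \int_\Omega u\,\mathcal{M}(p')\,\mathrm{d}x$. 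Moreover, \eqref{eq:energy-identity} and \eqref{eq:kato-bound} show that $\mathcal{E}_u$ is Lipschitz and nonincreasing, so $\mathcal{E}_u^{(r-1)/2}$ is absolutely continuous with $\tfrac{\mathrm{d}}{\mathrm{d}t}\mathcal{E}_u^{(r-1)/2} = \tfrac{r-1}{2}\mathcal{E}_u^{(r-3)/2}\mathcal{E}_u'$ a.e.\ (to be read as $\tfrac{2}{r-1}$ times $\tfrac{\mathrm{d}}{\mathrm{d}t}(\mathcal{E}_u^{(r-1)/2})$, which is where $r \geq 2$ enters). I would also record the traces $p_{|\Gamma}=0$, $u_{|\Gamma} = -Pg(D^*u')$ (from \eqref{eq:carac-domain}, available under the first part of Assumption~\ref{as:geometry}) and $\Phi_{|\Gamma} = Pg(D^*u')$ (since $\Phi = DPg(D^*u')$ and $D$ is the harmonic extension), and the identity $\|u'\|^2_{H^{-1}(\Omega)} = \|A^{1/2}p\|^2_{L^2(\Omega)} = \|\nabla p\|^2_{L^2(\Omega)}$, so that $\int_\Omega |\nabla p|^2 + |u|^2\,\mathrm{d}x = 2\mathcal{E}_u$.

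Then come the two pointwise-in-$t$ computations. For $\int_\Omega u'\,\mathcal{M}p\,\mathrm{d}x$, I substitute $u' = -\Delta p$ and apply the standard Rellich identity: since $p_{|\Gamma}=0$ forces $\nabla p = (\partial_\nu p)\nu$ on $\Gamma$, Green's formula gives $\int_\Omega u'\,\mathcal{M}p\,\mathrm{d}x = \int_\Omega|\nabla p|^2\,\mathrm{d}x - \int_\Gamma(h\cdot\nu)|\partial_\nu p|^2\,\mathrm{d}\sigma$. For $\int_\Omega u\,\mathcal{M}(p')\,\mathrm{d}x$, I substitute $p' = -[u+\Phi]$ and integrate by parts in space, moving $h\cdot\nabla$ off $u$ and off $\Phi$ (using $\operatorname{div}h = d$); the two boundary contributions involve $u_{|\Gamma}^2$ and $u_{|\Gamma}\Phi_{|\Gamma}$, which by the recorded traces equal $\mathds{1}_{\Gamma_0}|g(D^*u')|^2$ and $-\mathds{1}_{\Gamma_0}|g(D^*u')|^2$ respectively, and after collecting terms one obtains $\int_\Omega u\,\mathcal{M}(p')\,\mathrm{d}x = \int_\Omega|u|^2\,\mathrm{d}x + \int_{\Gamma_0}(h\cdot\nu)|g(D^*u')|^2\,\mathrm{d}\sigma + \int_\Omega[(d+1)\Phi u + 2\Phi(h\cdot\nabla u)]\,\mathrm{d}x$. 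Summing the two and using $\int_\Omega|\nabla p|^2 + |u|^2\,\mathrm{d}x = 2\mathcal{E}_u$ yields, a.e.\ in $t$,
\[
2\mathcal{E}_u = \frac{\mathrm{d}}{\mathrm{d}t}\int_\Omega u\,\mathcal{M}p\,\mathrm{d}x + \int_\Gamma(h\cdot\nu)|\partial_\nu p|^2\,\mathrm{d}\sigma - \int_{\Gamma_0}(h\cdot\nu)|g(D^*u')|^2\,\mathrm{d}\sigma - \int_\Omega[(d+1)\Phi u + 2\Phi(h\cdot\nabla u)]\,\mathrm{d}x .
\]
Multiplying by $\mathcal{E}_u^{(r-1)/2}\geq 0$, integrating over $(\tau_1,\tau_2)$, and integrating the total-derivative term by parts in time (both factors being absolutely continuous) produces exactly \eqref{eq:multiplier-identity}: the time-boundary term gives $[\mathcal{E}_u^{(r-1)/2}\int_\Omega u\,\mathcal{M}p\,\mathrm{d}x]_{\tau_1}^{\tau_2}$ and the correction term gives $-\tfrac{r-1}{2}\int_{\tau_1}^{\tau_2}\mathcal{E}_u'\mathcal{E}_u^{(r-3)/2}\int_\Omega u\,\mathcal{M}p\,\mathrm{d}x\,\mathrm{d}t$.

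I expect the only real difficulty to be the time-regularity bookkeeping rather than any individual algebraic step: one must carefully justify that $t\mapsto\int_\Omega u\,\mathcal{M}p\,\mathrm{d}x$ and $t\mapsto\mathcal{E}_u^{(r-1)/2}$ are absolutely continuous (so the product rule and the time integration by parts are valid), that $u'(t) = -\Delta p(t)$ holds as an identity between $L^2(\Omega)$-functions for a.e.\ $t$, and that $\mathcal{M}$ commutes with $\partial_t$ — all of which rest on the $L^\infty$-in-time bounds of Lemma~\ref{lem:properties-p} and Proposition~\ref{prop:reg}. The spatial integrations by parts are then routine once one knows $p(t)\in H^2(\Omega)\cap H^1_0(\Omega)$ and $u(t),\Phi(t)\in H^1(\Omega)$ with traces in $H^{1/2}(\Gamma)$, which those same results supply.
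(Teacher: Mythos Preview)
Your proof is correct and follows essentially the same approach as the paper's: the same Rellich identity for $\int_\Omega \Delta p\,\mathcal{M}p$, the same divergence identity \eqref{eq:id-div}, the substitution $u=-[p'+\Phi]$, and the time integration by parts against the weight $\mathcal{E}_u^{(r-1)/2}$. The only difference is organizational---you first derive the pointwise-in-$t$ identity for $2\mathcal{E}_u$ and then weight and integrate, whereas the paper tests the integrated equation $-[p'+\Phi]'=Ap$ against $\mathcal{E}_u^{(r-1)/2}\mathcal{M}p$ from the outset and splits the computation into four steps; both routes are equivalent.
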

\begin{proof} The proof is split into four steps.

\textbf{Step 1: Integration by parts with respect to time.}
First, by linearity and continuity of $\mathcal{M}$, $\mathcal{M}p$ belongs to $W^{1, \infty}(0, +\infty; L^2(\Omega))$. On the other hand, recall that $\mathcal{E}_u$ is bounded and absolutely continuous with
\begin{equation}
\mathcal{E}_u'= - \int_{\Gamma_0} g(D^* u') D^*u' \, \mathrm{d}\sigma \quad \mbox{a.e.,} \quad \mathcal{E}'_u \in L^\infty(0, +\infty),
\end{equation}
because strong solutions are Lipschitz continuous with respect to time. Thus, $\mathcal{E}_u^{(r-1)/2}$ belongs to $W^{1, \infty}(0, +\infty)$ and $\mathcal{E}_u^{(r-1)/2} \mathcal{M}p$ belongs to $W^{1, \infty}(0, + \infty; L^2(\Omega))$ with
\begin{equation}
[\mathcal{E}_u^{(r-1)/2} \mathcal{M}p]' = \mathcal{E}_u^{(r-1)/2} \mathcal{M}p' + \frac{(r -1)}{2} \mathcal{E}_u' \mathcal{E}^{(r-3)/2}_u \mathcal{M}p \quad \mbox{a.e.}
\end{equation}
Now, it follows from \eqref{eq:u-p'-Phi-identity} that $p' + \Phi$ belongs to $W^{1, \infty}(0, +\infty; L^2(\Omega))$ and
\begin{equation}
\label{eq:p'-Phi-Ap}
-  [p' + \Phi]' = u' = Ap.
\end{equation}
Let $0 \leq \tau_1 \leq \tau_2$. Recall that, since $p \in \mathcal{D}(A)$, $Ap = -\Delta p \in L^2(\Omega)$. Thus, taking the scalar product of \eqref{eq:p'-Phi-Ap} with $\mathcal{E}_u^{(r-1)/2} \mathcal{M}p$ in $L^2(\tau_1, \tau_2; L^2(\Omega))$ and using the integration by parts formula in $W^{1, 2}(\tau_1, \tau_2; L^2(\Omega))$ leads to
\begin{equation}
\label{eq:identity-time-ipp}
\begin{aligned}
- \int_{\tau_1}^{\tau_2} \mathcal{E}_u^{(r-1)/2} \int_\Omega \Delta p \mathcal{M}p \, \mathrm{d}x \, \mathrm{d}t = \mathcal{E}_u^{(r-1)/2} \int_\Omega u \mathcal{M}p \, \mathrm{d}x \bigg |_{\tau_1}^{\tau_2} - \int_{\tau_1}^{\tau_2} \mathcal{E}_u^{(r-1)/2} \int_\Omega u \mathcal{M}p' \, \mathrm{d}x \, \mathrm{d}t \\
- \frac{(r-1)}{2} \int_{\tau_1}^{\tau_2} \mathcal{E}_u' \mathcal{E}_u^{(r-3)/2} \int_\Omega u \mathcal{M}p \, \mathrm{d}x \, \mathrm{d}t.
\end{aligned}
\end{equation}

\textbf{Step 2: Multiplier technique for the wave equation.} In what follows, we apply classical vector calculus identities to recover the $H^1_0(\Omega) \times L^2(\Omega)$-energy of $[p, p']$.
Since $p$ takes values in $H^2(\Omega)$, Rellich's identity yields
\begin{equation}
\label{eq:rellich}
\int_\Omega \Delta p [2 h\cdot \nabla p] \, \mathrm{d}x = (d - 2) \int_\Omega \| \nabla p \|^2 \, \mathrm{d}x + \int_\Gamma \partial_\nu p [2h\cdot \nabla p] \, \mathrm{d}\sigma - \int_\Gamma (h\cdot \nu) \|\nabla p\|^2 \, \mathrm{d} \sigma.
\end{equation}
Furthermore, $p_{|\Gamma} = 0$; thus
\begin{equation}
\label{eq:zero-tang}
\nabla p = (\partial_\nu p) \nu \quad \mbox{on}~ \Gamma.
\end{equation}
Combining \eqref{eq:rellich} and \eqref{eq:zero-tang}, we obtain
\begin{equation}
\label{eq:rellich-bis}
\int_\Omega \Delta p [2 h\cdot \nabla p] = (d - 2) \int_\Omega \| \nabla p\|^2 \, \mathrm{d}x + \int_\Gamma (h \cdot \nu) |\partial_\nu p|^2 \, \mathrm{d}\sigma.
\end{equation}
On the other hand,
\begin{equation}
\label{eq:ipp-Dp-p}
\int_\Omega \Delta p (d-1)p \, \mathrm{d}x = - (d-1) \int_\Omega \| \nabla p \|^2 \, \mathrm{d}x,
\end{equation}
where we use again that $p$ vanishes on the boundary. Summing \eqref{eq:rellich-bis} and \eqref{eq:ipp-Dp-p}  yields
\begin{equation}
\label{eq:u-p'-Phi}
\int_\Omega \Delta p \mathcal{M}p \, \mathrm{d}x = - \int_\Omega \|\nabla p\|^2 \, \mathrm{d}x + \int_\Gamma (h \cdot \nu) |\partial_\nu p|^2 \, \mathrm{d}x.
\end{equation}
Coming back to \eqref{eq:identity-time-ipp} and recalling \eqref{eq:u-p'-Phi-identity}, let us write
\begin{equation}
\label{eq:id-uMp'}
\begin{aligned}
\int_\Omega u \mathcal{M}p' &= - \int_\Omega p' \mathcal{M}p' \, \mathrm{d}x - \int_\Omega \Phi \mathcal{M}p' \mathrm{d}x  \\
&= -  \int_\Omega  p' [2 h\cdot \nabla p'] + (d - 1) |p'|^2 \, \mathrm{d}x   - \int_\Omega \Phi \mathcal{M}p' \, \mathrm{d}x \\
&= \int_\Omega |p'|^2 \, \mathrm{d}x   - \int_\Omega \Phi \mathcal{M}p' \, \mathrm{d}x,
\end{aligned}
\end{equation}
where we use the identity
\begin{equation}
\label{eq:id-div}
\int_\Omega \phi [2h \cdot \nabla \phi]  \, \mathrm{d}x= \int_\Gamma (h \cdot \nu) (\phi_{|\Gamma})^2 \, \mathrm{d} \sigma - \int_\Omega (\phi)^2 \operatorname{div}h \, \mathrm{d}x \quad \mbox{for all}~ \phi \in H^1(\Omega)
\end{equation}
together with $\operatorname{div} h = d$ and $p_{|\Gamma} = 0$.
Therefore, combining \eqref{eq:identity-time-ipp} with \eqref{eq:u-p'-Phi} and \eqref{eq:id-uMp'} leads to
\begin{equation}
\label{eq:conc-step2}
\begin{aligned}
\int_{\tau_1}^{\tau_2} \mathcal{E}_u^{(r-1)/2} \int_\Omega \|\nabla p\|^2 + |p'|^2 \, \mathrm{d}x \, \mathrm{d}t =  \mathcal{E}_u^{(r-1)/2} \int_\Omega u \mathcal{M}p \, \mathrm{d}x \bigg |_{\tau_1}^{\tau_2} + \int_{\tau_1}^{\tau_2} \mathcal{E}_u^{(r-1)/2} \int_\Omega \Phi \mathcal{M}p' \, \mathrm{d}x \, \mathrm{d}t \\
+ \int_{\tau_1}^{\tau_2} \mathcal{E}_u^{(r-1)/2} \int_\Gamma (h\cdot \nu) |\partial_\nu p|^2 \, \mathrm{d}\sigma \, \mathrm{d}t - \frac{(r-1)}{2} \int_{\tau_1}^{\tau_2} \mathcal{E}_u' \mathcal{E}_u^{(r-3)/2} \int_\Omega u \mathcal{M}p \, \mathrm{d}x \, \mathrm{d}t.
\end{aligned}
\end{equation}

\textbf{Step 3: Additional terms.} Here, we put the terms involving $\Phi$ into a form suitable for further estimation. It follows from \eqref{eq:u-p'-Phi-identity} that
\begin{equation}
\int_\Omega \Phi \mathcal{M}p' \, \mathrm{d}x = - \int_\Omega \Phi \mathcal{M}u \, \mathrm{d}x - \int_\Omega \Phi \mathcal{M} \Phi \, \mathrm{d}x.
\end{equation}
Applying \eqref{eq:id-div} to $\Phi$, similarly to \eqref{eq:id-uMp'}, we obtain
\begin{equation}
\label{eq:mult-phi}
\begin{aligned}
- \int_\Omega \Phi \mathcal{M} \Phi \, \mathrm{d}x &=   \int_\Omega  |\Phi|^2 \, \mathrm{d}x  - \int_\Gamma (h\cdot \nu) (\Phi_{|\Gamma})^2 \\
&=  \int_\Omega |\Phi|^2 \, \mathrm{d}x - \int_{\Gamma_0} (h \cdot \nu) |g(D^*u')|^2 \, \mathrm{d}\sigma,
\end{aligned}
\end{equation}
where we use that, by definition, 
\begin{equation}
\label{eq:trace-phi}
\Phi_{|\Gamma} = g(D^*Ap) = g(D^*u') ~\mbox{on}~\Gamma_0, \quad \Phi_{|\Gamma} = 0 ~\mbox{on}~ \Gamma_1.
\end{equation}
On the other hand, we recall that
\begin{equation}
\label{eq:phi-mult-u}
- \int_\Omega \Phi \mathcal{M} u \, \mathrm{d}x = - \int_\Omega \Phi [2 h\cdot \nabla u] + (d- 1) \Phi u  \, \mathrm{d}x.
\end{equation} 
Plugging \eqref{eq:mult-phi} and \eqref{eq:phi-mult-u} into \eqref{eq:conc-step2} 
leads to
\begin{equation}
\label{eq:conc-step3}
\begin{aligned}
\int_{\tau_1}^{\tau_2} \mathcal{E}_u^{(r-1)/2} \int_\Omega \|\nabla p\|^2 + |p'|^2 \, \mathrm{d}x \, \mathrm{d}t =  \mathcal{E}_u^{(r-1)/2} \int_\Omega u \mathcal{M}p \, \mathrm{d}x \bigg |_{\tau_1}^{\tau_2} - \frac{(r-1)}{2} \int_{\tau_1}^{\tau_2} \mathcal{E}_u' \mathcal{E}_u^{(r-3)/2} \int_\Omega u \mathcal{M}p \, \mathrm{d}x \, \mathrm{d}t \\
+ \int_{\tau_1}^{\tau_2} \mathcal{E}_u^{(r-1)/2} \int_\Gamma (h\cdot \nu) |\partial_\nu p|^2 \, \mathrm{d}\sigma - \int_{\Gamma_0} (h \cdot \nu) |g(D^*Ap)|^2 \, \mathrm{d} \sigma  \, - \int_\Omega \Phi [2h \cdot \nabla u] + (d-1)\Phi u - |\Phi|^2  \, \mathrm{d}x \, \mathrm{d}t.
\end{aligned} 
\end{equation}


\textbf{Step 4: Conclusion.} We finish the proof by rewriting \eqref{eq:conc-step3} as an estimate of $\int_{\tau_1}^{\tau_2} \mathcal{E}_u^{(r+1)/2} \, \mathrm{d}t$. This is done as follows. First, by definition of the $p$-variable,
\begin{equation}
\label{eq:u'-H-1}
\int_\Omega \|\nabla p\|^2 \, \mathrm{d}x  = \|A^{1/2}p\|^2_{L^2(\Omega)} = \|A^{-1/2}u'\|^2_{L^2(\Omega)} = \|u'\|^2_{H^{-1}(\Omega)}.
\end{equation}
On the other hand, it immediately follows from $u = -[p' + \Phi]$ that
\begin{equation}
\label{eq:p'-L2}
\int_\Omega |p'|^2 \, \mathrm{d}x = \int_\Omega |u|^2 \, \mathrm{d}x + \int_\Omega 2 \Phi u + |\Phi|^2 \, \mathrm{d}x  \quad \mbox{a.e.}
\end{equation}
Summing \eqref{eq:u'-H-1} and \eqref{eq:p'-L2} yields
\begin{equation}
\label{eq:comp-ener-u-p}
\int_\Omega \|\nabla p\|^2 + |p'|^2 \, \mathrm{d}x = 2 \mathcal{E}_u +  \int_\Omega 2 \Phi u + |\Phi|^2 \, \mathrm{d}x  \quad \mbox{a.e.}
\end{equation}
Plugging \eqref{eq:comp-ener-u-p} into \eqref{eq:conc-step3}, we get the desired identity.
\end{proof}
\begin{rem}[Disconnected boundary]
The assumption $\overline{\Gamma_0} \cap \overline{\Gamma_1} = \emptyset$ allows us to work with $H^1(\Omega)\times L^2(\Omega)$-valued strong solution to the Dirichlet problem \eqref{eq:pde-bc}-\eqref{eq:feedback-law}. The corresponding regularity for the Neumann problem \eqref{eq:neumann-problem} would be $H^2(\Omega) \times H^1(\Omega)$.
Assume for a moment that $h \cdot \nu > 0$ on $\Gamma_0$. In \cite{xu_saturated_2019,zuazua_uniform_1990}, a slightly modified version of the Neumann problem \eqref{eq:neumann-problem} is considered, namely $\partial_{tt}u - \Delta u = 0$ in $\Omega$, $\partial_\nu u = - (h \cdot \nu) g(\partial_t u)$ on $\Gamma_0$ and $u_{|\Gamma} = 0$ on $\Gamma_1$. The authors are able to relax the hypothesis $\overline{\Gamma_0} \cap \overline{\Gamma_1} = \emptyset$
 by relying on a Rellich-like \emph{inequality} that is valid in space dimension $d\leq 3$ and reads as follows:
\begin{equation}
\label{eq:rellich-inequality}
\int_\Omega \Delta \phi [2h \cdot \nabla \phi] \, \mathrm{d} x \leq (d - 2) \int_\Omega \| \nabla \phi \|^2 \, \mathrm{d}x
+ \int_\Gamma \partial_\nu \phi [2h \cdot \nabla \phi] \, \mathrm{d} \sigma -
 \int_\Gamma (h \cdot \nu)  \| \nabla \phi \|^2 \, \mathrm{d}\sigma,
\end{equation}
holding for any $\phi$ satisfying $\phi \in H^1(\Omega)$, $\Delta \phi \in L^2(\Omega)$, $\phi_{|\Gamma} = 0$ on $\Gamma_1$ and
$(h \cdot \nu)^{-1} \partial_\nu \phi  \in L^2(\Gamma_0)$. The reader is referred to \cite{grisvard1987controlabilite,komornik1990direct} for the proof of \eqref{eq:rellich-inequality}. The main benefit in that case is that the multiplier analysis can be carried out even when $\overline{\Gamma_0} \cap \overline{\Gamma_1} \not= \emptyset$, in which case strong solutions may have weaker regularity.
%


\end{rem}

\subsection{Estimates of the right-hand side and conclusion}

 Our goal in this subsection is to establish an integral inequality in the form of
\begin{equation}
\label{eq:goal}
(1 - \mu) \int_{\tau_1}^{\tau_2} \mathcal{E}_u^{(r+1)/2} \, \mathrm{d}t \leq K_u \{ \mathcal{E}_u(\tau_1) + \mathcal{E}_u (\tau_2) \} \quad \mbox{for all}~ 0 \leq \tau_1 \leq \tau_2,
\end{equation}
where $K_u$ is a constant that may depend on the initial data and $\mu$ is a sufficiently small constant that may depend on $u$ as well. Bearing in mind the full statement of Theorem \ref{th:pdr}, we aim at finding such constants that depend on $\|\mathcal{A}[u_0, v_0]\|_\mathcal{H}$ and $ \mathcal{E}(u_0, v_0) = \mathcal{E}_u(0)$ only.

 Assuming that \eqref{eq:goal} holds, we let $\tau_2$ go to $+ \infty$ to obtain
\begin{equation}
\int_{\tau}^{+\infty} \mathcal{E}_u^{(r + 1)/2} \, \mathrm{d}t \leq \frac{K_u}{1 - \mu}\mathcal{E}_u(\tau) \quad \mbox{for all}~\tau \geq 0.
\end{equation}
Then, Theorem \ref{th:pdr} follows readily from Lemma \ref{lem:int-ineq} if we choose 
\begin{equation}
\label{eq:constants-pdr}
\gamma = \frac{(r-1)}{2} \quad \mbox{and}~ T = \frac{K_u}{1 - \mu}\mathcal{E}_u(0)^{-(r-1)/2}.
\end{equation}
To prove \eqref{eq:goal},  we shall examine each term in the multiplier identity \eqref{eq:multiplier-identity} and derive estimates in terms of
\begin{itemize}
\item Either directly $\mathcal{E}_u(\tau_1)$ or $\mathcal{E}_u(\tau_2)$;
\item Or the boundary dissipation term  $ \int_{\tau_1}^{\tau_2}\int_{\Gamma_0} g(D^*u') D^*u' \, \mathrm{d} \sigma$ which is nonnegative and can be integrated, since
\begin{equation}
- \mathcal{E}_u'=  \int_{\Gamma_0} g(D^*u')D^*u' \, \mathrm{d}\sigma = - \int_{\Gamma_0} g(-\partial_\nu p) \partial_\nu p \, \mathrm{d}\sigma;
\end{equation}
\item And also $\int_{\tau_1}^{\tau_2} \mathcal{E}^{(r+1)/2}_u \, \mathrm{d}t$ premultiplied by small  $\mu$ so that it can be absorbed in the left-hand side.
\end{itemize}

\begin{rem} In what follows, we shall denote by $K$, $K'$, etc. generic constants that \emph{do not} depend on the initial data.
\end{rem}
We can write estimates in terms of $\int_{\tau_1}^{\tau_2} \int_{\Gamma_0} |g(D^*u')|^2 \, \mathrm{d}\sigma \, \mathrm{d}t$ instead of $\int_{\tau_1}^{\tau_2} \int_{\Gamma_0} g(D^*u')D^*u' \, \mathrm{d}\sigma \, \mathrm{d}t$ since
\begin{equation}
0 \leq \int_{\Gamma_0} |g(D^*u')|^2 \, \mathrm{d}\sigma \leq K \int_{\Gamma_0} |g(D^*u')D^*u'| \, \mathrm{d}\sigma =  K \int_{\Gamma_0} g(D^*u')D^*u' \, \mathrm{d}\sigma
\end{equation}
by Lipschitz continuity and nonincreasingness of $g$, together with $g(0) = 0$.

%

Moreover, recalling \eqref{eq:geometry-h} in Assumption \ref{as:geometry} and looking at the sign of each term, we observe that $|\partial_\nu p|$ in \eqref{eq:multiplier-identity} need not be estimated on the uncontrolled boundary $\Gamma_1$. 

That being said, let us start by estimating the term involving $2 h\cdot \nabla u$ in \eqref{eq:multiplier-identity}. This is done in the following lemma.
\begin{lemma}
\label{lem:hard-term}
Suppose that $r \geq 2$. Then, there exists a positive constant $K$ such that
\begin{multline}
\label{eq:hard-term}
\left |  \int_{\tau_1}^{\tau_2} \mathcal{E}_u^{(r-1)/2}  \int_\Omega \Phi [2 h\cdot \nabla u] \, \mathrm{d}x \, \mathrm{d}t  \right |  \\ \leq 
K \|\mathcal{A}[u_0, v_0]\|^{1/2}_\mathcal{H} \left \{  \int_{\tau_1}^{\tau_2} \frac{1}{\mu} \int_{\Gamma_0} |g(D^*u')|^2 \, \mathrm{d}\sigma +
\mu \mathcal{E}_u^{(r -2)/2}(0) \mathcal{E}_u ^{(r+1)/2} \, \mathrm{d}t \right \}
\end{multline}
 for all $\tau_2 \geq \tau_1 \geq 0$ and $\mu > 0$.
\end{lemma}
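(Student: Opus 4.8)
The plan is to turn the left-hand side of \eqref{eq:hard-term} into a product of a factor that is \emph{small in $\mathcal{E}_u$} (carried by $\Phi$ and the boundary nonlinearity) and a factor that is merely \emph{bounded at the regularity level $\|\mathcal{A}[u_0,v_0]\|_\mathcal{H}$} (carried by $\nabla u$), and then to balance the two by a weighted Young inequality. First I would apply Cauchy--Schwarz in space: since $h(x)=x-x_0$ is bounded on $\overline\Omega$,
\[
\Big|\int_\Omega \Phi\,[2h\cdot\nabla u]\,\mathrm{d}x\Big| \leq 2\Big(\sup_{\overline\Omega}|h|\Big)\,\|\Phi\|_{L^2(\Omega)}\,\|\nabla u\|_{L^2(\Omega)},
\]
and bound $\|\nabla u(t)\|_{L^2(\Omega)}\leq\|u(t)\|_{H^1(\Omega)}\leq K\|\mathcal{A}[u_0,v_0]\|_\mathcal{H}$ for all $t$ via Proposition \ref{prop:reg}, i.e.\ \eqref{eq:bound-H1-L2} together with the Kato bound \eqref{eq:kato-bound}, which also gives $\|u'(t)\|_{L^2(\Omega)}\leq K\|\mathcal{A}[u_0,v_0]\|_\mathcal{H}$. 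The gradient of $u$ carries no smallness along the trajectory, so this factor is the source of the power of $\|\mathcal{A}[u_0,v_0]\|_\mathcal{H}$ in the statement.

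The key step is the estimation of $\|\Phi\|_{L^2(\Omega)}$. Since $\Phi=DPg(D^*u')$ and $D\in\mathcal{L}(L^2(\Gamma),L^2(\Omega))$, one has $\|\Phi\|_{L^2(\Omega)}\leq\|D\|\,\|g(D^*u')\|_{L^2(\Gamma_0)}$, and the $L^2(\Gamma)$-norm of $D^*u'$ can be made small in $\mathcal{E}_u$ by interpolating the trace: applying $\|f\|_{L^2(\Gamma)}\leq C\|f\|_{H^{1/2}(\Gamma)}^{1/2}\|f\|_{H^{-1/2}(\Gamma)}^{1/2}$ to $f=D^*u'$, using the mapping properties $D^*\in\mathcal{L}(L^2(\Omega),H^{1/2}(\Gamma))\cap\mathcal{L}(H^{-1}(\Omega),H^{-1/2}(\Gamma))$ from \eqref{eq:properties-D}, the bound $\|u'(t)\|_{L^2(\Omega)}\leq K\|\mathcal{A}[u_0,v_0]\|_\mathcal{H}$ just recalled, $\|u'(t)\|_{H^{-1}(\Omega)}^2\leq 2\mathcal{E}_u(t)$, and the global Lipschitz continuity of $g$ with $g(0)=0$, I obtain
\[
\|g(D^*u'(t))\|_{L^2(\Gamma_0)}\leq C\,\|\mathcal{A}[u_0,v_0]\|_\mathcal{H}^{1/2}\,\mathcal{E}_u(t)^{1/4}.
\]
This simultaneously produces the half-power of $\|\mathcal{A}[u_0,v_0]\|_\mathcal{H}$ and an extra factor of $\mathcal{E}_u$. (Alternatively, one may first integrate by parts via the divergence theorem, using $\operatorname{div}h=d$ and the trace identities $u_{|\Gamma}=-Pg(D^*u')$, $\Phi_{|\Gamma}=Pg(D^*u')$, to rewrite $\int_\Omega\Phi[2h\cdot\nabla u]\,\mathrm{d}x=-2\int_{\Gamma_0}(h\cdot\nu)|g(D^*u')|^2\,\mathrm{d}\sigma-2\int_\Omega u\,(h\cdot\nabla\Phi)\,\mathrm{d}x-2d\int_\Omega u\Phi\,\mathrm{d}x$, where the boundary term is already of the desired type and the two interior terms are treated by the same a priori bounds, now also using $\|\nabla\Phi\|_{L^2(\Omega)}\leq C\|g(D^*u')\|_{H^{1/2}(\Gamma_0)}\leq C\|\mathcal{A}[u_0,v_0]\|_\mathcal{H}$ via \eqref{eq:properties-D} and \eqref{eq:str-cont-g}.)

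It then remains to feed these bounds into $\mathcal{E}_u^{(r-1)/2}\,|\int_\Omega\Phi[2h\cdot\nabla u]\,\mathrm{d}x|$ and to split by a weighted Young inequality $ab\leq\frac1{2\mu}a^2+\frac{\mu}{2}b^2$ so that one summand is a multiple of $\int_{\Gamma_0}|g(D^*u')|^2\,\mathrm{d}\sigma$ — which, by the Lipschitz–monotonicity bound recalled just before the lemma, is $\leq K(-\mathcal{E}_u')$ and hence integrates to a harmless multiple of $\mathcal{E}_u(\tau_1)$ — and the other summand is a multiple of $\mathcal{E}_u^{(r+1)/2}$, to be absorbed into the left-hand side of \eqref{eq:goal}. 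Throughout, I would use that $\mathcal{E}_u$ is nonincreasing, majorizing any surplus power of $\mathcal{E}_u(t)$ by the corresponding power of $\mathcal{E}_u(0)$; this is the origin of the factor $\mathcal{E}_u^{(r-2)/2}(0)$.

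The hard part will be the bookkeeping of the exponents, not the crude inequality (which is immediate from Cauchy--Schwarz). One must arrange the Young splitting so that, after multiplication by the weight $\mathcal{E}_u^{(r-1)/2}$, the power of $\mathcal{E}_u(t)$ that survives is exactly $(r+1)/2$: no larger, so that it genuinely sits under $(1-\mu)\int\mathcal{E}_u^{(r+1)/2}$, and no smaller, so that the companion term retains the form $\int_{\Gamma_0}|g(D^*u')|^2\,\mathrm{d}\sigma$ and can be controlled by the dissipation. This forces a careful distribution of the powers of $\mathcal{E}_u$ available from $\|\Phi\|_{L^2(\Omega)}$ through the trace interpolation and from the weight itself, and it is exactly here that the hypothesis $r\geq 2$ is used — to ensure that all the exponents of $\mathcal{E}_u(0)$ that appear remain nonnegative.
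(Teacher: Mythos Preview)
Your plan has a genuine gap in the exponent bookkeeping, and it is precisely the ``hard part'' you flag at the end. Applying Cauchy--Schwarz directly in $L^2(\Omega)$ and then bounding $\|\nabla u\|_{L^2(\Omega)}\leq K\|\mathcal{A}[u_0,v_0]\|_\mathcal{H}$ throws away all $\mathcal{E}_u$-smallness on the gradient side. The remaining factor is $\|g(D^*u')\|_{L^2(\Gamma_0)}$, and here you face a dilemma: if you keep it intact, the Young split of $\mathcal{E}_u^{(r-1)/2}\|g(D^*u')\|$ yields $\tfrac{1}{\mu}\|g(D^*u')\|^2+\mu\,\mathcal{E}_u^{\,r-1}$, and $\mathcal{E}_u^{\,r-1}\leq \mathcal{E}_u(0)^{(r-3)/2}\mathcal{E}_u^{(r+1)/2}$ requires $r\geq 3$, not $r\geq 2$; if instead you spend your interpolation bound $\|g(D^*u')\|\leq C\|\mathcal{A}\|^{1/2}\mathcal{E}_u^{1/4}$, you lose the dissipation factor altogether and are left with $\mathcal{E}_u^{(2r-1)/4}$, which is strictly smaller than $\mathcal{E}_u^{(r+1)/2}$ for every $r$. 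A partial splitting $\|g(D^*u')\|=\|g(D^*u')\|^{\alpha}\|g(D^*u')\|^{1-\alpha}$ does not help either: running the same computation shows one needs $\alpha\geq 3/r$, hence again $r\geq 3$. Your alternative integration-by-parts route has the same defect, since $\|h\cdot\nabla\Phi\|_{L^2}\leq C\|\mathcal{A}\|$ carries no $\mathcal{E}_u$-smallness and the surviving power is $\mathcal{E}_u^{r/2}$. Incidentally, your claim that the interpolation on $D^*u'$ ``produces the half-power of $\|\mathcal{A}[u_0,v_0]\|_\mathcal{H}$'' is inconsistent with the full power already spent on $\|\nabla u\|_{L^2}$; you would end up with $\|\mathcal{A}\|^{3/2}$ (harmless for the final theorem, but a sign the accounting is off).

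The missing idea is to place the interpolation on the \emph{other} factor. Use the duality $(\Phi,2h\cdot\nabla u)_{L^2(\Omega)}=(g(D^*u'),D^*[2h\cdot\nabla u])_{L^2(\Gamma_0)}$ and then Cauchy--Schwarz on the boundary, so that $\|g(D^*u')\|_{L^2(\Gamma_0)}$ is preserved untouched. For the second factor, $D^*\in\mathcal{L}(H^{-1/2}(\Omega),L^2(\Gamma))$ reduces matters to $\|2h\cdot\nabla u\|_{H^{-1/2}(\Omega)}$, which one interpolates between $\|2h\cdot\nabla u\|_{L^2(\Omega)}\leq K\|\mathcal{A}[u_0,v_0]\|_\mathcal{H}$ and the key estimate $\|2h\cdot\nabla u\|_{H^{-1}(\Omega)}\leq K\|u\|_{L^2(\Omega)}\leq K'\mathcal{E}_u^{1/2}$, obtained by one integration by parts against test functions in $H^1_0(\Omega)$. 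This yields $|\int_\Omega\Phi[2h\cdot\nabla u]\,\mathrm{d}x|\leq K\|\mathcal{A}[u_0,v_0]\|_\mathcal{H}^{1/2}\|g(D^*u')\|_{L^2(\Gamma_0)}\mathcal{E}_u^{1/4}$, and after multiplying by $\mathcal{E}_u^{(r-1)/2}$ and applying Young you get $\mathcal{E}_u^{\,r-1/2}=\mathcal{E}_u(0)^{(r-2)/2}\mathcal{E}_u^{(r+1)/2}$, which is exactly where $r\geq 2$ enters.
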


\begin{proof}
We start by writing
\begin{equation}
(\Phi, 2h\cdot \nabla u)_{L^2(\Omega)} = (DPg(D^*u'), 2h\cdot \nabla u)_{L^2(\Omega)} = (g(D^*u'), D^*[2h\cdot\nabla u])_{L^2(\Gamma_0)}.
\end{equation}
Thus, applying the Cauchy-Schwarz inequality yields
\begin{equation}
\label{eq:cs}
\left |\int_\Omega \Phi [2h\cdot \nabla u] \, \mathrm{d}x \right | \leq  \|g(D^*u')\|_{L^2(\Gamma_0)} \|D^*[2h \cdot \nabla u ]\|_{L^2(\Gamma_0)} \leq  \|g(D^*u')\|_{L^2(\Gamma_0)} \|D^*[2h \cdot \nabla u ]\|_{L^2(\Gamma)}.
\end{equation}
Next, recall from \eqref{eq:properties-D} that
\begin{equation}
\label{eq:cont-D}
D^* \in \mathcal{L}(H^{-1/2}(\Omega), L^2(\Gamma)).
\end{equation}
Therefore, it follows from \eqref{eq:cs} that
\begin{equation}
\label{eq:CS}
\left |\int_\Omega \Phi  [2h\cdot \nabla u] \, \mathrm{d}x \right | \leq K  \|g(D^*Ap)\|_{L^2(\Gamma_0)} \|2h \cdot \nabla u\|_{H^{-1/2}(\Omega)}.
\end{equation}
Linear interpolation between the Sobolev spaces $L^2(\Omega)$ and $H^{-1}(\Omega)$ leads to
\begin{equation}
\|2 h\cdot \nabla u\|_{H^{-1/2}(\Omega)} \leq K \|2 h\cdot \nabla u\|^{1/2}_{L^2(\Omega)} \|2 h\cdot \nabla u\|^{1/2}_{H^{-1}(\Omega)}
\end{equation}
First, by Proposition \ref{prop:reg},
\begin{equation}
\label{eq:bound-graph}
\|2 h\cdot \nabla u\|^{1/2}_{L^2(\Omega)} \leq  K \|h \|_{L^\infty(\Omega)^d}^{1/2}\|\nabla u \|_{L^2(\Omega)^d}^{1/2} \leq  K' \| \mathcal{A}[u_0, v_0] \|_\mathcal{H}^{1/2}.
\end{equation}
Besides,  since $2h\cdot \nabla u$ belongs to $L^2(\Omega)$,
we have
\begin{equation}
\langle 2h\cdot \nabla u, w \rangle_{H^{-1}(\Omega), H^1_0(\Omega)} = (2h\cdot \nabla u, w)_{L^2(\Omega)} \quad \mbox{for all}~ w \in H^1_0(\Omega).
\end{equation}
Let us write
\begin{equation}
\begin{aligned}
\int_\Omega [2 h\cdot \nabla u] w \, \mathrm{d}x &= 2 \sum_{i=1}^{d} \int_\Omega h_i \partial_{i} u w \, \mathrm{d}x \\
&= - 2 \sum_{i=1}^{d} \int_\Omega u [w \partial_{i} h_i + h_i \partial_{i} w] \, \mathrm{d}x +\sum_{i=1}^d \int_\Gamma \nu_i h_i u w \, \mathrm{d}\sigma \\
&= - 2 \int_\Omega u [w \operatorname{div} h +  h\cdot \nabla w] \, \mathrm{d}x,
\end{aligned}
\end{equation}
where $h_i$ (resp. $\nu_i$) denotes the $i$-th coordinate of the vector field $h$ (resp. the outward normal vector $\nu$). Recall that $h \in \mathcal{C}^1(\overline{\Omega})$. Then, using the Poincaré inequality on $w$,  we obtain that for some $K > 0$, 
\begin{equation}
\label{eq:ineq-scal}
\left |(2 h\cdot \nabla u, w)_{L^2(\Omega)} \right| \leq K \| u\|_{L^2(\Omega)} \| w \|_{H^1_0(\Omega)}.
\end{equation}
By norm equivalence between $H^{-1}(\Omega)$ and $H^1_0(\Omega)'$,
 we infer from \eqref{eq:ineq-scal} that
\begin{equation}
\label{eq:cont-H-1-L2}
\|2 h\cdot \nabla u\|_{H^{-1}(\Omega)} \leq K \|u\|_{L^2(\Omega)}.
\end{equation}
Coming back to \eqref{eq:CS}, combining \eqref{eq:bound-graph} and \eqref{eq:cont-H-1-L2} yields
\begin{equation}
\begin{aligned}
\left |\int_\Omega Dg(D^*u') [2h\cdot \nabla u] \, \mathrm{d}x \right |  & \leq K \|\mathcal{A}[u_0, v_0]\|^{1/2}_\mathcal{H} \|g(D^*u')\|_{L^2(\Gamma_0)} \|u\|^{1/2}_{L^2(\Omega)}
\\  &\leq K'  \|\mathcal{A}[u_0, v_0]\|^{1/2}_\mathcal{H} \|g(D^*u')\|_{L^2(\Gamma_0)} \mathcal{E}_u^{1/4}.
\end{aligned}
\end{equation}
Therefore, since $\mathcal{E}_u \geq 0$, we have
\begin{equation}
\left | \mathcal{E}_u^{(r-1)/2}  \int_\Omega Dg(D^*u') 2 h\cdot \nabla u \, \mathrm{d}x  \right | \leq K  \|\mathcal{A}[u_0, v_0]\|^{1/2}_\mathcal{H} \|g(D^*u')\|_{L^2(\Gamma_0)} \mathcal{E}_u^{(r-1)/2 + 1/4}.
\end{equation}
Applying Young's inequality with a parameter $\mu > 0$, we obtain
\begin{equation}
\label{eq:first-young}
\|g(D^*u')\|_{L^2(\Gamma_0)}\mathcal{E}^{(r-1)/2 +1/4}_u \leq \frac{1}{2\mu} \|g(D^*u')\|_{L^2(\Gamma_0)}^{2} + \frac{\mu}{2} \mathcal{E}_u^{r - 1/2}.
\end{equation}
It is assumed that $r \geq 2$. Thus, letting $\eta \triangleq  (r - 1/2) - (r +1)/2 \geq 0$, by nonincreasingness of $\mathcal{E}_u$, we have
\begin{equation}
\label{eq:power-exp}
\mathcal{E}_u^{r-1/2} = \mathcal{E}_u^\eta \mathcal{E}^{(r+1)/2} \leq \mathcal{E}_u^\eta(0) \mathcal{E}_u^{(r+1)/2}.
\end{equation}
Plugging \eqref{eq:power-exp} into \eqref{eq:first-young} and integrating over $(\tau_1, \tau_2)$ yields the desired result.
\end{proof}

Next, we deal with the term involving $|\partial_\nu p| = |D^*u'|$ on the controlled boundary $\Gamma_0$. Here, the arguments are very similar to those employed in the case of saturated Neumann feedback -- see \cite[Theorem 9.10]{komornik_exact_1994} or \cite[Theorem 3]{xu_saturated_2019}.
\begin{lemma} 
\label{lemma:sat}
Suppose that $r \geq d - 1$. Then, there exists $ K > 0$  and  $\eta \in (0, 1)$ such that
\begin{equation}
\begin{aligned}
\left | \int_{\tau_1}^{\tau_2}  \mathcal{E}_u^{(r-1)/2} \int_{\Gamma_0} |\partial_\nu p|^2 \, \mathrm{d}\sigma \, \mathrm{d}t  \right |  \leq
 K  \| \mathcal{A}[u_0, v_0]\|_\mathcal{H}^{2 - \eta}    \mu^{ 1/\eta} \int_{\tau_1}^{\tau_2} \int_{\Gamma_0} g(D^*u') D^*u' \, \mathrm{d}\sigma \, \mathrm{d}t  \\ + K  \| \mathcal{A}[u_0, v_0]\|_\mathcal{H}^{2 - \eta} \mu^{-1/(1 - \eta)} \int_{\tau_1}^{\tau_2} \mathcal{E}_u^{(r+1)/2} \, \mathrm{d}t 
+  K \int_{\tau_1}^{\tau_2} \int_{\Gamma_0} |g(D^*u')|^2 \, \mathrm{d}\sigma \, \mathrm{d}t  
\end{aligned}
\end{equation}
for all $0 \leq \tau_1 \leq \tau_2$ and $\mu > 0$.
\end{lemma}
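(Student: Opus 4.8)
The plan is to work pointwise on $\Gamma_0$ via the identity $-\partial_\nu p = D^*u'$ (valid since $u'$ takes values in $L^2(\Omega)$ by Proposition \ref{prop:reg}), so that $|\partial_\nu p|^2 = |D^*u'|^2$ on $\Gamma_0$, and then to split $\Gamma_0$, at each time $t$, into the \emph{linear zone} $\Sigma_1^t \triangleq \{\sigma \in \Gamma_0 : |D^*u'(\sigma,t)| \leq S\}$ and the \emph{saturated zone} $\Sigma_2^t \triangleq \Gamma_0 \setminus \Sigma_1^t$. The two zones are handled by entirely different mechanisms. On $\Sigma_1^t$ the nonlinearity behaves linearly, so Assumption \ref{as:nonlinearity} gives $|D^*u'|^2 \leq \alpha_1^{-2}|g(D^*u')|^2$ pointwise; integrating over $\Sigma_1^t\subset\Gamma_0$ and using that $\mathcal{E}_u^{(r-1)/2}(t)\leq \mathcal{E}_u(0)^{(r-1)/2}$ (the energy being nonincreasing, as established in Section \ref{sec:wp-as}), the contribution of the linear zone is exactly the last term $K\int_{\tau_1}^{\tau_2}\int_{\Gamma_0}|g(D^*u')|^2\,\mathrm{d}\sigma\,\mathrm{d}t$ of the claimed estimate.

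The saturated zone is where the factors $\|\mathcal{A}[u_0,v_0]\|_\mathcal{H}^{2-\eta}$ and $\mu$ arise. On $\Sigma_2^t$, monotonicity of $g$ together with Assumptions \ref{as:g}--\ref{as:nonlinearity} (the latter evaluated at $s=\pm S$) yields $|g(D^*u')|\geq|g(\pm S)|\geq \alpha_1 S$, hence $|D^*u'|\leq (\alpha_1 S)^{-1}g(D^*u')D^*u'$ pointwise. One then writes $|D^*u'|^2 = |D^*u'|^{\eta}\,|D^*u'|^{2-\eta}$, bounds the first factor by $\big((\alpha_1 S)^{-1}g(D^*u')D^*u'\big)^{\eta}$, and applies H\"older's inequality on $\Gamma_0$ with the conjugate exponents $1/\eta$ and $1/(1-\eta)$: this produces the dissipation density integrated to the first power, times $\|D^*u'\|_{L^q(\Gamma_0)}^{2-\eta}$ with $q = (2-\eta)/(1-\eta) > 2$. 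The $L^q$-norm is then controlled through $D^*\in\mathcal{L}(L^2(\Omega),H^{1/2}(\Gamma))$ (see \eqref{eq:properties-D}), the Sobolev embedding $H^{1/2}(\Gamma)\hookrightarrow L^q(\Gamma)$, and the graph bound $\|u'(t)\|_{L^2(\Omega)}\leq K\|\mathcal{A}[u_0,v_0]\|_\mathcal{H}$ of Proposition \ref{prop:reg}, which is what delivers the factor $\|\mathcal{A}[u_0,v_0]\|_\mathcal{H}^{2-\eta}$.

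It then remains to reinsert the weight $\mathcal{E}_u^{(r-1)/2}$ and apply Young's inequality with parameter $\mu$ to the product $\mathcal{E}_u^{(r-1)/2}\big(\int_{\Gamma_0}g(D^*u')D^*u'\big)^{\eta}$, again with exponents $1/\eta$ and $1/(1-\eta)$: the first factor yields $\mu^{1/\eta}\int_{\Gamma_0}g(D^*u')D^*u'$ (the first term of the estimate), while $\mathcal{E}_u^{(r-1)/2}$ is raised to the power $(r-1)/(2(1-\eta))$. Choosing $\eta = 2/(r+1)$ makes this exponent equal to $(r+1)/2$, producing $\mu^{-1/(1-\eta)}\mathcal{E}_u^{(r+1)/2}$; one finally integrates in $t$ over $(\tau_1,\tau_2)$ and adds the two zones. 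Note that the value $\eta=2/(r+1)$ forced by the Young step corresponds, through $q=(2-\eta)/(1-\eta)$, to $q = 2r/(r-1)$, so the H\"older split and the Young step are automatically consistent.

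The main obstacle is precisely the compatibility of these exponents with the Sobolev embedding: for $d\geq3$ one needs $q\leq 2(d-1)/(d-2)$, and a short computation shows $2r/(r-1)\leq 2(d-1)/(d-2)\;\Leftrightarrow\;r\geq d-1$, which is exactly the hypothesis of the lemma (with $r=d-1$ landing on the included endpoint exponent); the critical dimension $d=2$, where $H^{1/2}(\Gamma)$ embeds into every $L^q$ with $q<\infty$ but not into $L^\infty$, requires only a routine separate remark and is covered by $r\geq 2$. A minor secondary point is to check that the surplus power $\mathcal{E}_u^{(r-1)/2}$ left over on the linear zone is harmless, which follows from the monotone decay of $\mathcal{E}_u$.
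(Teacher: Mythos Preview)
Your proof is correct and follows essentially the same route as the paper: the same splitting of $\Gamma_0$ into the zones $\{|D^*u'|\leq S\}$ and its complement, the same use of Assumption~\ref{as:nonlinearity} on the linear zone, the same lower bound $|g(D^*u')|\geq\min\{g(S),-g(-S)\}$ on the saturated zone followed by H\"older with exponents $1/\eta$ and $1/(1-\eta)$, the same Sobolev embedding $H^{1/2}(\Gamma_0)\hookrightarrow L^{(2-\eta)/(1-\eta)}(\Gamma_0)$ combined with \eqref{eq:properties-D} and Proposition~\ref{prop:reg}, the same Young step, and the same choice $\eta=2/(r+1)$. Your explicit remark that the leftover factor $\mathcal{E}_u^{(r-1)/2}$ on the linear zone is harmless by monotonicity of $\mathcal{E}_u$ is a point the paper leaves implicit.
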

\begin{proof}
For each $t \geq 0$, we set
\begin{equation}
\Gamma^{0}_t \triangleq \left \{ \sigma \in \Gamma_0 :  |\partial_\nu p(\sigma, t)| \leq S \right \} \quad \mbox{and}~ \Gamma^1_t  \triangleq \Gamma_0 \setminus \Gamma^{0}_t.
\end{equation}
where we recall that the constant $S$ is introduced in Assumption \ref{as:nonlinearity}.
 Then, 
\begin{equation}
\label{eq:decoupage}
\left | \int_{\Gamma_0} (h \cdot \nu)|\partial_\nu p|^2 \, \mathrm{d}\sigma \right| \leq K 
 \int_{\Gamma^0_t} |\partial_\nu p|^2 \, \mathrm{d}\sigma + K \int_{\Gamma_t^{1}} |\partial_\nu p|^2 \, \mathrm{d}\sigma.
\end{equation}
Using \eqref{eq:nonlinearity} in Assumption \ref{as:nonlinearity}, we estimate the first term in \eqref{eq:decoupage} as follows:
\begin{equation}
\label{eq:sat-first}
 \int_{\Gamma^0_t} |\partial_\nu p|^2 \, \mathrm{d}\sigma \leq \alpha_1^{-2} \int_{\Gamma^0_t} |g(- \partial_\nu p)|^2 \, \mathrm{d}\sigma \leq \alpha_1^{-2} \int_{\Gamma_0} |g(- \partial_\nu p)|^2 \, \mathrm{d} \sigma = \alpha_1^{-2} \int_{\Gamma_0} |g(D^*u')|^2 \, \mathrm{d}\sigma.
\end{equation}
Let us examine the second term in \eqref{eq:decoupage}. Setting a  parameter $\eta \in (0, 1)$ to be tuned later on, we have
\begin{equation}
\label{eq:times-div}
\int_{\Gamma^1_t} | \partial_\nu p|^2  \, \mathrm{d}\sigma = \int_{\Gamma^1_t} |D^*u'|^2 \, \mathrm{d}\sigma = \int_{\Gamma^1_t} |D^*u'|^{2 - \eta} \frac{|g(D^*u') D^*u'|^\eta}{|g(D^*u')|^\eta} \, \mathrm{d}\sigma.
\end{equation}
Equation \eqref{eq:times-div} makes sense since $|g(D^*u')| \geq \min \{ g(S), -g(-S)\} > 0$ on $\Gamma^{1}_t$. In fact, we have
\begin{equation}
\label{eq:insert-power}
\begin{aligned}
\int_{\Gamma^1_t} |D^*u'|^2 \, \mathrm{d}\sigma &\leq \min \{ g(S), -g(-S)\}^{-\eta} \int_{\Gamma^1_t} |D^*u'|^{2 - \eta} |g(D^*u')D^*u'|^\eta \, \mathrm{d}\sigma \\
&\leq K \int_{\Gamma_0}  |D^*u'|^{2 - \eta} |g(D^*u')D^*u'|^\eta \, \mathrm{d}\sigma.
\end{aligned}
\end{equation}
Using H\"older's inequality with conjugates $1/\eta$ and $1/(1 - \eta)$, we infer from \eqref{eq:insert-power} that
\begin{equation}
\label{eq:holder}
\int_{\Gamma^1_t} |D^*u'|^2 \, \mathrm{d}\sigma \leq K \left ( \int_{\Gamma_0} |D^*u'|^{\frac{2 - \eta}{1 - \eta}} \right )^{1- \eta} \left ( \int_{\Gamma_0} g(D^*u') D^*u' \, \mathrm{d}\sigma \right )^{\eta}.
\end{equation}
Now, $[u, u']$ being a strong solution to \eqref{eq:pde-bc}-\eqref{eq:feedback-law}, we recall from Proposition \ref{prop:reg} that $u'$ takes values in $L^2(\Omega)$ and $\|u'(t)\|_{L^2(\Omega)} \leq K \|\mathcal{A}[u_0, v_0]\|_\mathcal{H}$ for all $t \geq 0$. The continuity property \eqref{eq:properties-D} yields
\begin{equation}
\label{eq:bound-D}
\|D^*u'(t)\|_{H^{1/2}(\Gamma)} \leq K \|\mathcal{A}[u_0, v_0]\|_\mathcal{H} \quad \mbox{for all}~ t \geq 0.
\end{equation}
In what follows, we rely on (fractional) Sobolev inequalities -- see \cite[Theorem 6.5 and Theorem 6.9]{di_nezza_hitchhikers_2012}. First, we consider the case $d \geq 3$, where we recall that $d$ denotes the space dimension. We have the continuous embedding
\begin{equation}
\label{eq:embedding}
H^{1/2}(\Gamma_0) \hookrightarrow L^q(\Gamma_0) \quad \mbox{for all}~ q \in \left  [2, \frac{2(d - 1)}{d - 2} \right ] \triangleq I_d.
\end{equation}
Furthermore, since $r + 1 \geq d$, if we choose $\eta = 2/(r+1)$, some computations yield $(2 - \eta)/(1 - \eta) \in I_d $; hence
\begin{equation}
\label{eq:embedding-eta}
H^{1/2}(\Gamma_0) \hookrightarrow L^{\frac{2 - \eta}{1 - \eta}}(\Gamma_0).
\end{equation}
If $d= 2$, then the embedding \eqref{eq:embedding} holds in fact  for any $q \in [2, +\infty)$; therefore, \eqref{eq:embedding-eta} is valid as well. Coming back to \eqref{eq:holder}, combining \eqref{eq:embedding-eta} with \eqref{eq:bound-D} yields
\begin{equation}
\mathcal{E}_u^{(r-1)/2} \int_{\Gamma^1_t} |D^*u'|^2 \, \mathrm{d}\sigma \leq K  \| \mathcal{A}[u_0, v_0]\|_\mathcal{H}^{2 - \eta} \mathcal{E}_u^{(r-1)/2} \left ( \int_{\Gamma_0}g(D^*u') D^*u' \, \mathrm{d}\sigma \right )^{\eta}.
\end{equation}
Applying the Young inequality with conjugates $1/\eta$ and $1/( 1 - \eta)$, we get
\begin{equation}
\label{eq:embed-y}
\mathcal{E}_u^{(r-1)/2} \int_{\Gamma^1_t} |D^*u'|^2 \, \mathrm{d}\sigma \leq  K  \| \mathcal{A}[u_0, v_0]\|_\mathcal{H}^{2 - \eta} \left \{ \mu^{-\frac{1}{1 - \eta}} \mathcal{E}_u^{\frac{r - 1}{2(1 - \eta)}} + \mu^{\frac{1}{\eta}} \int_{\Gamma_0} g(D^*u') D^*u' \, \mathrm{d} \sigma \right \} \quad \mbox{for all}~ \mu > 0.
\end{equation}
Since $(r-1)/2(1  -\eta) = (r +1)/2$,
we conclude the proof by combining \eqref{eq:embed-y} and \eqref{eq:sat-first} together with \eqref{eq:decoupage}.
\end{proof}

At this point, the proof of Theorem \ref{th:pdr}  is almost complete. Estimates of the remaining terms in \eqref{eq:multiplier-identity} are given in the next lemmas. Following our remarks at the beginning of the subsection, we claim that Theorem \ref{th:pdr} is  proved once those are established.
\begin{lemma}
There exists a positive constant $K$ such that
\begin{equation}
\label{eq:integrated-term}
\left | \left [ \mathcal{E}^{(r-1)/2}_u \int_\Omega u \mathcal{M}p \, \mathrm{d}x \right ]_{\tau_1}^{\tau_2} \right | 
\leq K \mathcal{E}^{(r-1)/2}_u(0) \{ \mathcal{E}_u(\tau_1) + \mathcal{E}_u(\tau_2) \} \quad \mbox{for all}~ 0 \leq \tau_1 \leq \tau_2.
\end{equation}
\end{lemma}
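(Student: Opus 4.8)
The plan is to bound the integrand $\mathcal{E}_u^{(r-1)/2}(t)\int_\Omega u(t)\mathcal{M}p(t)\,\mathrm{d}x$ pointwise in $t$ by $K\,\mathcal{E}_u^{(r-1)/2}(0)\,\mathcal{E}_u(t)$, and then to evaluate at $t=\tau_1$ and $t=\tau_2$ and conclude by the triangle inequality. The point is that all factors entering $\int_\Omega u\mathcal{M}p\,\mathrm{d}x$ are controlled by the natural energy $\mathcal{E}_u$, and that the relevant constant depends only on $d$, $\Omega$ and the point $x_0$ from Assumption \ref{as:geometry}, not on the initial data.

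First I would estimate the multiplier. Since $h(x)=x-x_0$ lies in $\mathcal{C}^1(\overline\Omega)$ and $\Omega$ is bounded, $\|h\|_{L^\infty(\Omega)^d}<+\infty$; and since, by Lemma \ref{lem:properties-p}, $p(t)\in\mathcal{D}(A)\subset H^1_0(\Omega)$, the Poincaré inequality gives $\|p(t)\|_{L^2(\Omega)}\le C_P\|\nabla p(t)\|_{L^2(\Omega)^d}$. Hence
\[
\|\mathcal{M}p(t)\|_{L^2(\Omega)}\le 2\|h\|_{L^\infty(\Omega)^d}\|\nabla p(t)\|_{L^2(\Omega)^d}+(d-1)C_P\|\nabla p(t)\|_{L^2(\Omega)^d}\le K\|\nabla p(t)\|_{L^2(\Omega)^d}.
\]
Recalling $p=A^{-1}u'$, one has $\|\nabla p(t)\|_{L^2(\Omega)^d}^2=\|A^{1/2}p(t)\|_{L^2(\Omega)}^2=\|A^{-1/2}u'(t)\|_{L^2(\Omega)}^2=\|u'(t)\|_{H^{-1}(\Omega)}^2\le 2\mathcal{E}_u(t)$, so that $\|\mathcal{M}p(t)\|_{L^2(\Omega)}\le K\mathcal{E}_u^{1/2}(t)$. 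By Proposition \ref{prop:reg} the quantity $u(t)$ is a genuine element of $L^2(\Omega)$ for every $t\ge0$, with $\|u(t)\|_{L^2(\Omega)}\le(2\mathcal{E}_u(t))^{1/2}$, so that the Cauchy--Schwarz inequality yields
\[
\left|\int_\Omega u(t)\mathcal{M}p(t)\,\mathrm{d}x\right|\le\|u(t)\|_{L^2(\Omega)}\|\mathcal{M}p(t)\|_{L^2(\Omega)}\le K\mathcal{E}_u(t).
\]

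To finish, I would multiply by $\mathcal{E}_u^{(r-1)/2}(t)$ and use that $\mathcal{E}_u$ is nonincreasing — so that $\mathcal{E}_u^{(r-1)/2}(t)\le\mathcal{E}_u^{(r-1)/2}(0)$, which is legitimate since $r\ge\max\{d-1,2\}$ forces $(r-1)/2\ge1/2>0$ — obtaining
\[
\left|\mathcal{E}_u^{(r-1)/2}(t)\int_\Omega u(t)\mathcal{M}p(t)\,\mathrm{d}x\right|\le K\,\mathcal{E}_u^{(r-1)/2}(0)\,\mathcal{E}_u(t)\quad\text{for all }t\ge0,
\]
and then apply this at $t=\tau_1$ and $t=\tau_2$ together with the triangle inequality to get \eqref{eq:integrated-term}. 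There is no real obstacle here; the only points deserving a word of care are the justification of the pointwise manipulations (supplied by Lemma \ref{lem:properties-p} and Proposition \ref{prop:reg}) and the verification that the generic constant $K$ is independent of the initial data, which is clear since it only involves $d$, $\|h\|_{L^\infty(\Omega)^d}$ and the Poincaré constant of $\Omega$.
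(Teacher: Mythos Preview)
Your proof is correct and follows essentially the same route as the paper: Cauchy--Schwarz on $\int_\Omega u\,\mathcal{M}p\,\mathrm{d}x$, the bound $\|\mathcal{M}p\|_{L^2(\Omega)}\lesssim\|p\|_{H^1_0(\Omega)}=\|u'\|_{H^{-1}(\Omega)}$, the nonincreasingness of $\mathcal{E}_u$, and the triangle inequality. A minor remark: you do not need Proposition~\ref{prop:reg} to assert $u(t)\in L^2(\Omega)$, since this is already built into the energy space $\mathcal{H}=L^2(\Omega)\times H^{-1}(\Omega)$.
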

\begin{proof}
Let $\tau \geq 0$. Then,
\begin{equation}
\begin{aligned}
\left |\mathcal{E}_u^{(r-1)/2}(\tau) \int_\Omega u(\tau) \mathcal{M}p(\tau) \, \mathrm{d}x \right | &\leq  \mathcal{E}^{(r-1)/2}_u(\tau) \|u(\tau)\|_{L^2(\Omega)} \| \mathcal{M}p(\tau)\|_{L^2(\Omega)} \\
&\leq K \mathcal{E}_u^{(r-1)/2}(\tau) \|u(\tau)\|_{L^2(\Omega)} \|p(\tau)\|_{H^1_0(\Omega)} \\
&\leq K \mathcal{E}^{(r-1)/2}(\tau)\|u(\tau)\|_{L^2(\Omega)} \|u'(\tau)\|_{H^{-1}(\Omega)} \\
&\leq K \mathcal{E}^{(r-1)/2}_u(0) \mathcal{E}_u(\tau),
\end{aligned}
\end{equation}
where it used that $\mathcal{E}_u$ is nonincreasing. Equation \eqref{eq:integrated-term} readily follows from the triangular inequality.
\end{proof}

\begin{lemma} There exists a positive constant $K$ such that
\begin{equation}
\left |  \int_{\tau_1}^{\tau_2} \mathcal{E}_u'\mathcal{E}_u^{(r-3)/2} \int_\Omega u \mathcal{M}p \, \mathrm{d}x \, \mathrm{d}t \right | \leq K \mathcal{E}_u^{(r-1)/2}(0) \{ \mathcal{E}_u(\tau_1) + \mathcal{E}_u(\tau_2) \} \quad \mbox{for all}~ 0 \leq \tau_1 \leq \tau_2.
\end{equation}
\end{lemma}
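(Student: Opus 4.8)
The plan is to reduce the estimate to the monotonicity of $\mathcal{E}_u$ combined with the same Cauchy--Schwarz bound on the multiplier term that was used in the previous lemma. First I would bound the inner spatial integral pointwise in time: exactly as before,
\[
\left| \int_\Omega u\,\mathcal{M}p \, \mathrm{d}x \right| \leq \|u\|_{L^2(\Omega)} \|\mathcal{M}p\|_{L^2(\Omega)} \leq K \|u\|_{L^2(\Omega)} \|p\|_{H^1_0(\Omega)} = K \|u\|_{L^2(\Omega)} \|u'\|_{H^{-1}(\Omega)} \leq K \mathcal{E}_u ,
\]
where the bound $\|\mathcal{M}p\|_{L^2(\Omega)} \leq K\|p\|_{H^1_0(\Omega)}$ comes from $h \in \mathcal{C}^1(\overline{\Omega})$ and the Poincar\'e inequality, the identity $\|p\|_{H^1_0(\Omega)} = \|A^{-1/2}u'\|_{L^2(\Omega)} = \|u'\|_{H^{-1}(\Omega)}$ is \eqref{eq:u'-H-1}, and the last step is the arithmetic--geometric mean inequality applied to the two summands of $\mathcal{E}_u$.

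Next, since $r \geq 2$ we have $(r-1)/2 \geq 0$, so multiplying by $\mathcal{E}_u^{(r-3)/2}$ gives $\mathcal{E}_u^{(r-3)/2}\bigl|\int_\Omega u\,\mathcal{M}p\,\mathrm{d}x\bigr| \leq K \mathcal{E}_u^{(r-1)/2} \leq K \mathcal{E}_u^{(r-1)/2}(0)$, the last inequality because $\mathcal{E}_u$ is nonincreasing. Recalling that $\mathcal{E}_u' \leq 0$ almost everywhere (it equals $-\int_{\Gamma_0} g(D^*u')D^*u'\,\mathrm{d}\sigma$, which is nonnegative), I then estimate
\[
\left| \int_{\tau_1}^{\tau_2} \mathcal{E}_u' \mathcal{E}_u^{(r-3)/2} \int_\Omega u\,\mathcal{M}p \, \mathrm{d}x \, \mathrm{d}t \right| \leq K \mathcal{E}_u^{(r-1)/2}(0) \int_{\tau_1}^{\tau_2} (-\mathcal{E}_u') \, \mathrm{d}t = K \mathcal{E}_u^{(r-1)/2}(0) \bigl( \mathcal{E}_u(\tau_1) - \mathcal{E}_u(\tau_2) \bigr),
\]
and since $\mathcal{E}_u(\tau_1) - \mathcal{E}_u(\tau_2) \leq \mathcal{E}_u(\tau_1) \leq \mathcal{E}_u(\tau_1) + \mathcal{E}_u(\tau_2)$ this yields the claimed inequality.

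I do not expect a genuine obstacle here; the only delicate point is the interpretation of the factor $\mathcal{E}_u^{(r-3)/2}$ when $r = 2$ and $\mathcal{E}_u$ vanishes. For this reason I would keep the product $\mathcal{E}_u^{(r-3)/2} \int_\Omega u\,\mathcal{M}p\,\mathrm{d}x$ together and bound it directly by $K\mathcal{E}_u^{(r-1)/2}$ --- a quantity that is well-defined and continuous (in particular $\to 0$ where the energy degenerates) --- rather than estimating the singular power on its own, after which the argument is purely a matter of integrating the nonpositive derivative $\mathcal{E}_u'$.
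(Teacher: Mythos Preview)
Your proof is correct and follows essentially the same approach as the paper: both start from the pointwise bound $\bigl|\int_\Omega u\,\mathcal{M}p\,\mathrm{d}x\bigr| \leq K\mathcal{E}_u$ and then use the sign of $\mathcal{E}_u'$ to integrate in time. The only cosmetic difference is that the paper keeps the factor $\mathcal{E}_u^{(r-1)/2}$ inside the time integral and recognizes $(-\mathcal{E}_u')\mathcal{E}_u^{(r-1)/2}$ as the exact derivative $-\tfrac{2}{r+1}\bigl[\mathcal{E}_u^{(r+1)/2}\bigr]'$, whereas you bound that factor by $\mathcal{E}_u^{(r-1)/2}(0)$ first and then integrate $-\mathcal{E}_u'$ directly; both routes land on the same estimate.
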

\begin{proof}
Again, we write
\begin{equation}
\left | \int_\Omega u \mathcal{M}p \, \mathrm{d}x \right | \leq K \mathcal{E}_u.
\end{equation}
Therefore,
\begin{equation}
\label{eq:term-derivative}
\begin{aligned}
\left |  \int_{\tau_1}^{\tau_2} \mathcal{E}_u'\mathcal{E}_u^{(r-3)/2} \int_\Omega u \mathcal{M}p \, \mathrm{d}x \, \mathrm{d}t \right | \leq K \int_{\tau_1}^{\tau_2} (- \mathcal{E}'_u) \mathcal{E}_u^{(r - 1)/2} \, \mathrm{d}t &= - K \int_{\tau_1}^{\tau_2} \left [\frac{2}{r + 1} \mathcal{E}^{(r + 1)/2} \right ]' \mathrm{d}t  \\&= \frac{2K}{r + 1} \{ \mathcal{E}^{(r+1)/2}_u(\tau_1) - \mathcal{E}^{(r + 1)/2}_u(\tau_2) \}.
\end{aligned}
\end{equation}
The desired inequality follows from the nonincreasingness of $\mathcal{E}_u$ and \eqref{eq:term-derivative}.
\end{proof}
\begin{lemma}
There exists a positive contant $K$ such that 
\begin{equation}
\left | \int_{\tau_1}^{\tau_2} \mathcal{E}_u^{(r-1)/2} \int_\Omega \Phi u \, \mathrm{d}x \, \mathrm{d}t \right   | \leq K \left \{ \frac{1}{\mu}  \int_{\tau_1}^{\tau_2} \mathcal{E}_u^{(r+1)/2} \, \mathrm{d}t + \mu \mathcal{E}_u^{(r-1)/2}(0) \int_{\tau_1}^{\tau_2} \int_{\Gamma_0} |g(D^*u')|^2 \, \mathrm{d}\sigma \, \mathrm{d}t \right \}
\end{equation}
for all $0 \leq \tau_1 \leq \tau_2$ and $\mu > 0$.
\end{lemma}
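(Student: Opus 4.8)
The plan is to convert the volume integral into a boundary integral on $\Gamma_0$, exactly as in the proof of Lemma~\ref{lem:hard-term}. Since $\Phi = DPg(D^*u')$ and $u, u' \in L^2(\Omega)$ for a strong solution (Proposition~\ref{prop:reg}), the adjoint relation defining $D^*$ together with $P$ being multiplication by $\mathds{1}_{\Gamma_0}$ gives
\begin{equation}
\int_\Omega \Phi u \, \mathrm{d}x = (DPg(D^*u'), u)_{L^2(\Omega)} = (Pg(D^*u'), D^*u)_{L^2(\Gamma)} = (g(D^*u'), D^*u)_{L^2(\Gamma_0)}.
\end{equation}

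Next I would apply the Cauchy--Schwarz inequality on $L^2(\Gamma_0)$, then use $\|D^*u\|_{L^2(\Gamma_0)} \leq \|D^*u\|_{L^2(\Gamma)} \leq K \|u\|_{L^2(\Omega)}$ (continuity of $D^*$ from $L^2(\Omega)$ into $L^2(\Gamma)$, cf.\ \eqref{eq:properties-D}) and the elementary bound $\|u\|_{L^2(\Omega)}^2 \leq 2\mathcal{E}_u$. After multiplying by $\mathcal{E}_u^{(r-1)/2}$ this yields the pointwise estimate
\begin{equation}
\mathcal{E}_u^{(r-1)/2} \left| \int_\Omega \Phi u \, \mathrm{d}x \right| \leq K \|g(D^*u')\|_{L^2(\Gamma_0)} \, \mathcal{E}_u^{r/2}.
\end{equation}

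It remains to split this product with Young's inequality so that the powers of $\mathcal{E}_u(0)$ end up in the right place. We may assume $\mathcal{E}_u(0) > 0$, otherwise $u \equiv 0$ and there is nothing to prove. Applying $ab \leq \tfrac{c}{2} a^2 + \tfrac{1}{2c} b^2$ with the \emph{scaled} parameter $c \triangleq \mu \mathcal{E}_u^{(r-1)/2}(0)$, and then the elementary estimate $\mathcal{E}_u^r = \mathcal{E}_u^{(r-1)/2} \mathcal{E}_u^{(r+1)/2} \leq \mathcal{E}_u^{(r-1)/2}(0) \, \mathcal{E}_u^{(r+1)/2}$ (valid since $\mathcal{E}_u$ is nonincreasing and $r \geq 1$), I obtain
\begin{equation}
\mathcal{E}_u^{(r-1)/2} \left| \int_\Omega \Phi u \, \mathrm{d}x \right| \leq K \left\{ \tfrac{1}{\mu} \mathcal{E}_u^{(r+1)/2} + \mu \, \mathcal{E}_u^{(r-1)/2}(0) \, \|g(D^*u')\|_{L^2(\Gamma_0)}^2 \right\},
\end{equation}
and integrating over $(\tau_1, \tau_2)$ gives the claim. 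I do not anticipate a genuine obstacle here; the only delicate points are this tuning of the Young parameter --- an unscaled parameter would leave a spurious factor $\mathcal{E}_u^{(r-1)/2}(0)$ multiplying the $\mu^{-1}\mathcal{E}_u^{(r+1)/2}$ term --- and the trivial degenerate case $\mathcal{E}_u(0) = 0$.
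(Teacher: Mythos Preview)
Your proof is correct and follows essentially the same approach as the paper: Cauchy--Schwarz, continuity of $D$ (equivalently $D^*$), the bound $\|u\|_{L^2(\Omega)}^2 \leq 2\mathcal{E}_u$, Young's inequality, and nonincreasingness of $\mathcal{E}_u$. The only cosmetic differences are that the paper applies Cauchy--Schwarz directly in $L^2(\Omega)$ and then bounds $\|\Phi\|_{L^2(\Omega)} \leq K\|g(D^*u')\|_{L^2(\Gamma_0)}$ (rather than passing to the boundary first via $D^*$), and it uses an unscaled Young parameter---which avoids your degenerate case $\mathcal{E}_u(0)=0$ but lands on the inequality with the roles of $\mu$ and $1/\mu$ interchanged, an inconsequential relabeling.
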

\begin{proof} 
First, using Cauchy-Schwarz and Young inequalities, we obtain
\begin{equation}
\label{eq:ez-cs}
\begin{aligned}
\left | \int_{\tau_1}^{\tau_2} \mathcal{E}_u^{(r-1)/2} \int_\Omega \Phi u \, \mathrm{d}x \, \mathrm{d}t \right |& \leq \int_{\tau_1}^{\tau_2} \mathcal{E}_u^{(r-1)/2} \|\Phi\|_{L^2(\Omega)} \|u\|_{L^2(\Omega)} \, \mathrm{d}t \\
& \leq \frac{1}{2\mu} \int_{\tau_1}^{\tau_2} \mathcal{E}_u^{(r-1)/2} \|\Phi\|^2_{L^2(\Omega)} \, \mathrm{d}t + \mu \int_{\tau_1}^{\tau_2} \mathcal{E}_u^{(r+1)/2} \, \mathrm{d}t.
\end{aligned}
\end{equation}
Next, recall that
$\Phi = DPg(D^*u')$ and that $D$ continuously maps $L^2(\Gamma)$ into $L^2(\Omega)$. Therefore,
\begin{equation}
\label{eq:cont-D-Phi}
\|\Phi \|^2_{L^2(\Omega)} \leq K \|Pg(D^*u')\|_{L^2(\Gamma)}^2 = K \|g(D^*u')\|^2_{L^2(\Gamma_0)}
\end{equation}
and we conclude the proof by plugging \eqref{eq:cont-D-Phi} into \eqref{eq:ez-cs} and using that $\mathcal{E}_u$ is nonincreasing.
\end{proof}

\section{Concluding remarks}
\label{sec:conc}

In this section, we discuss the results of our paper and give some comments and perspectives.
\begin{itemize}
\item Theorem \ref{th:pdr} deals with the decay rate of strong solutions to \eqref{eq:pde-bc}-\eqref{eq:feedback-law}, which remain bounded in a stronger norm (here, in $H^1(\Omega)\times L^2(\Omega)$). In particular, this enables the use of Sobolev embeddings to obtain estimates of the boundary term $\partial_\nu [A^{-1}u']$ in $L^\infty(0, +\infty; L^q(\Gamma))$ for some appropriate $q$. In view of the energy identity \eqref{eq:energy-identity}, and as done in Lemma \ref{lemma:sat},  we can then derive an estimate involving only the ``dissipation term'' $g(-\partial_\nu[A^{-1}u'])$ and lower-order energy terms, even though no lower bound on the nonlinearity $g$ is prescribed at infinity. Here, using the terminology of \cite{vancostenoble_optimality_2000}, the feedback is allowed to be \emph{weak}, i.e., $g(s)/s$ can go to $0$ as $|s|$ goes to infinity, as it is the case when $g$ represents a saturation mapping; then, loss of uniformity is to be expected. More precisely, coming back to the Neumann problem,  the one-dimensional version of \eqref{eq:neumann-problem} with $g$ given by \eqref{eq:def-sat} is known to possess weak solutions that decay to zero (in the natural energy space $H^1(\Omega) \times L^2(\Omega)$) slower than any exponential or polynomial, whereas strong solutions decay exponentially to zero but in a non-uniform way -- see \cite[Theorem 4.1]{vancostenoble_optimality_2000} or also \cite[Theorem 4.33]{chitour_one-dimensional_2020}. Proving a similar result in our Dirichlet case would be interesting.
\item Putting aside the matter of saturated feedback and assuming if needed that $g$ has linear growth at infinity, we see that, unfortunately, the strategy followed here is not sufficient to prove \emph{uniform} decay of solutions to \eqref{eq:pde-bc}-\eqref{eq:feedback-law}. Indeed, while estimating the term $(\Phi, 2 h\cdot \nabla u)_{L^2(\Omega)}$ as in Lemma \ref{lem:hard-term} is good enough for the purpose of proving Theorem \ref{th:pdr}, it requires, again, that solutions remain bounded in a norm stronger than that of the energy space $\mathcal{H}$. If, instead of \eqref{eq:hard-term}, one manages to prove something in the likes of
\begin{equation}
\int_0^\tau \int_\Omega \Phi [2h\cdot \nabla u] \, \mathrm{d}x \, \mathrm{d}t \leq K(\tau) \int_0^\tau |g(D^*u')|^2 \, \mathrm{d}\sigma \, \mathrm{d}t + K' \{ \mathcal{E}_u(0) + \mathcal{E}_u(\tau) \} + \epsilon \int_0^\tau \mathcal{E}_u \, \mathrm{d}t
\end{equation}
for some $\tau >0$, where $K(\tau)$ is allowed to depend on $\tau$ and $\epsilon$ can be chosen sufficiently small, then, by remarking that the multiplier identity \eqref{eq:multiplier-identity} is still valid with the time-varying weight $\mathcal{E}(u, u')^{(r-1)/2}$ replaced by the constant $1$, one could easily adapt the rest of our proof to obtain exponential uniform stability. By following the proof of \cite[Lemma 3.3]{lasiecka_uniform_1987}, we can prove such an estimate when $g$ is the identity, at least under some specific geometrical conditions; however, the argument breaks down in the nonlinear case. Therefore, as mentioned in the introduction, the problem of uniform stability is still open.
\end{itemize}

\bibliographystyle{alpha}
\bibliography{wave-dirichlet}  

\begin{thebibliography}{PTGdSJ16}

\bibitem[CEL02]{chueshov_attractor_2002}
Igor Chueshov, Matthias Eller, and Irena Lasiecka.
\newblock On the attractor for a semilinear wave equation with critical
  exponent and nonlinear boundary dissipation.
\newblock {\em Communications in Partial Differential Equations},
  27(9-10):1901--1951, 2002.

\bibitem[CMM21]{chitour_one-dimensional_2020}
Yacine Chitour, Swann Marx, and Guilherme Mazanti.
\newblock One-dimensional wave equation with set-valued boundary damping:
  well-posedness, asymptotic stability, and decay rates.
\newblock {\em ESAIM: Control, Optimisation and Calculus of Variations}, 27:84,
  2021.

\bibitem[CP69]{crandall_semi-groups_1969}
Michael~Grain Crandall and Amnon Pazy.
\newblock Semi-groups of nonlinear contractions and dissipative sets.
\newblock {\em Journal of functional analysis}, 3(3):376--418, 1969.

\bibitem[DLT09]{daoulatli_uniform_2009}
Moez Daoulatli, Irena Lasiecka, and Daniel Toundykov.
\newblock Uniform energy decay for a wave equation with partially supported
  nonlinear boundary dissipation without growth restrictions.
\newblock {\em Discrete and Continuous Dynamical Systems}, 2(1):67--94, 2009.

\bibitem[DNPV12]{di_nezza_hitchhikers_2012}
Eleonora Di~Nezza, Giampiero Palatucci, and Enrico Valdinoci.
\newblock Hitchhiker's guide to the fractional {Sobolev} spaces.
\newblock {\em Bulletin des sciences math\'ematiques}, 136(5):521--573, 2012.

\bibitem[DS73]{dafermos_asymptotic_1973}
Constantine~Michael Dafermos and Marshall Slemrod.
\newblock Asymptotic behavior of nonlinear contraction semigroups.
\newblock {\em Journal of Functional Analysis}, 13(1):97--106, 1973.

\bibitem[Gri87]{grisvard1987controlabilite}
Pierre Grisvard.
\newblock Contr{\^o}labilit{\'e} exacte avec conditions m{\^e}l{\'e}es.
\newblock {\em Comptes Rendus de l'Académie des sciences de Paris},
  305:363--366, 1987.

\bibitem[Har91]{haraux_systemes_1991}
Alain Haraux.
\newblock {\em Syst\`emes dynamiques dissipatifs et applications}, volume~17.
\newblock Masson, 1991.

\bibitem[Kom94]{komornik_exact_1994}
Vilmos Komornik.
\newblock {\em Exact controllability and stabilization: the multiplier method},
  volume~36.
\newblock Wiley, 1994.

\bibitem[KZ90]{komornik1990direct}
Vilmos Komornik and Enrique Zuazua.
\newblock A direct method for the boundary stabilization of the wave equation.
\newblock {\em Journal de Math{\'e}matiques Pures et Appliqu{\'e}es},
  69(1):33--54, 1990.

\bibitem[LLT86]{lasiecka_non_1986}
Irena Lasiecka, Jacques-Louis Lions, and Roberto Triggiani.
\newblock Non homogeneous boundary value problems for second order hyperbolic
  operators.
\newblock {\em Journal de Mathématiques pures et Appliquées}, 65(2):149--192,
  1986.

\bibitem[LM61]{lions_problemes_1961_ii}
Jacques-Louis Lions and Enrico Magenes.
\newblock Probl\`emes aux limites non homog\`enes ({II}).
\newblock In {\em Annales de l'{Institut} {Fourier}}, volume~11, pages
  137--178, 1961.

\bibitem[LT87]{lasiecka_uniform_1987}
Irena Lasiecka and Roberto Triggiani.
\newblock Uniform exponential energy decay of wave equations in a bounded
  region with {$L_2(0, + \infty; L_2(\Gamma))$}-feedback control in the
  {Dirichlet} boundary conditions.
\newblock {\em Journal of Differential Equations}, 66:340--390, 1987.

\bibitem[LT92]{lasiecka_uniform_1992}
Irena Lasiecka and Roberto Triggiani.
\newblock Uniform stabilization of the wave equation with {Dirichlet} or
  {Neumann} feedback control without geometrical conditions.
\newblock {\em Applied Mathematics and Optimization}, 25(2):189--224, 1992.

\bibitem[LT93]{lasiecka_uniform_1993}
Irena Lasiecka and Daniel Tataru.
\newblock Uniform boundary stabilization of semilinear wave equations with
  nonlinear boundary damping.
\newblock {\em Differential and integral Equations}, 6(3):507--533, 1993.

\bibitem[PTGdSJ16]{prieur_wave_2016}
Christophe Prieur, Sophie Tarbouriech, and Jo\~ao~Manoel Gomes~da Silva~Jr.
\newblock Wave equation with cone-bounded control laws.
\newblock {\em IEEE Transactions on Automatic Control}, 61:3452--3463, 2016.

\bibitem[Rao93]{rao1993decay}
Bopeng Rao.
\newblock Decay estimates of solutions for a hybrid system of flexible
  structures.
\newblock {\em European Journal of Applied Mathematics}, 4(3):303--319, 1993.

\bibitem[Rob91]{robbiano_theoreme_1991}
Luc Robbiano.
\newblock Théorème d'unicité adapté au contrôle des solutions des
  problèmes hyperboliques.
\newblock {\em Communications in partial differential equations},
  16(4-5):789--800, 1991.

\bibitem[Sho13]{showalter_monotone_2013}
Ralph~Edwin Showalter.
\newblock {\em Monotone operators in {Banach} space and nonlinear partial
  differential equations}, volume~49.
\newblock American Mathematical Society, 2013.

\bibitem[Tem97]{temam_infinite-dimensional_2012}
Roger Temam.
\newblock {\em Infinite-dimensional dynamical systems in mechanics and
  physics}, volume~68.
\newblock Springer, 1997.

\bibitem[VM00]{vancostenoble_optimality_2000}
Judith Vancostenoble and Patrick Martinez.
\newblock Optimality of energy estimates for the wave equation with nonlinear
  boundary velocity feedbacks.
\newblock {\em SIAM Journal on Control and Optimization}, 39(3):776--797, 2000.

\bibitem[XX19]{xu_saturated_2019}
Cheng-Zhong Xu and Gen~Qi Xu.
\newblock Saturated boundary feedback stabilization of a linear wave equation.
\newblock {\em SIAM Journal on Control and Optimization}, 57(1):290--309, 2019.

\bibitem[Zua90]{zuazua_uniform_1990}
Enrique Zuazua.
\newblock Uniform stabilization of the wave equation by nonlinear boundary
  feedback.
\newblock {\em SIAM Journal on Control and Optimization}, 28(2):466--477, 1990.

\end{thebibliography}

\end{document}